\newtheorem{theorem}{Theorem}
\newtheorem{lemma}[theorem]{Lemma}
\newtheorem{algorithm}[theorem]{Algorithm}
\newtheorem{conjecture}[theorem]{Conjecture}
\newtheorem{definition}[theorem]{Definition}
\newtheorem{example}[theorem]{Example}
\newtheorem{remark}[theorem]{Remark}
\DeclareMathOperator{\lm}{lm}
\DeclareMathOperator{\LIdeal}{\ensuremath{\mathrm{L}}}
\DeclareMathOperator{\LM}{lm}
\DeclareMathOperator{\lc}{lc}
\DeclareMathOperator{\LC}{lc}
\DeclareMathOperator{\lt}{lt}
\DeclareMathOperator{\LT}{lt}
\DeclareMathOperator{\tail}{tail}
\DeclareMathOperator{\lcm}{lcm}
\newcommand{\IK}{\mathbb{K}}
\newcommand{\IN}{\mathbb{N}}
\newcommand{\IZ}{\mathbb{Z}}
\newcommand{\IQ}{\mathbb{Q}}
\renewcommand{\cite}{\citep}
\begin{document}

\begin{frontmatter}
\title{Computing Free Non-commutative Gr\"obner Bases over \texorpdfstring{$\mathbb Z$}{} with {\sc Singular:Letterplace}}

\author{Viktor Levandovskyy}
\address{Institut f\"ur Mathematik, Universit\"at Kassel, Germany}
\ead{levandovskyy@mathematik.uni-kassel.de}
\ead[url]{http://www.math.rwth-aachen.de/~Viktor.Levandovskyy/}

\author{Tobias Metzlaff}
\address{AROMATH, INRIA Sophia Antipolis M\'{e}diterran\'{e}e, Universit\'{e} C\^{o}te d'Azur, France}
\ead{tobias.metzlaff@inria.fr}


\author{Karim Abou Zeid}
\address{Lehrstuhl f\"ur Algebra und Zahlentheorie, RWTH Aachen University, Germany}
\ead{karim.abou.zeid@rwth-aachen.de}

\begin{abstract}
With this paper we present an extension of our recent ISSAC paper about computations of Gr\"obner(-Shirshov) bases over free associative algebras $\IZ\langle X\rangle$.
We present all the needed proofs in details, add a part on the direct treatment of the ring $\IZ/m\IZ$ as well as new examples 
and applications to e.g. Iwahori-Hecke algebras. 
The extension of Gr\"obner bases concept from polynomial algebras over fields to polynomial rings over rings allows to tackle numerous applications, both of theoretical and of practical importance. 
Gr\"obner and Gr\"obner-Shirshov bases can be defined for various non-commutative and even non-associative algebraic structures. We study the case of associative rings and aim at free algebras over principal ideal rings. We concentrate on the case of commutative coefficient rings without zero divisors (i.e.\ a domain). Even working over $\IZ$ allows one to do computations, which can be treated as universal for fields of arbitrary characteristic. By using the systematic approach, we revisit the theory and present the algorithms in the implementable form. We show drastic differences in the behavior of Gr\"obner bases between free algebras and algebras, which are close to commutative. 
Even the process of the formation of critical pairs has to be reengineered, together with implementing the criteria for their quick discarding.
We present an implementation of algorithms in the {\sc Singular} subsystem called {\sc Letterplace}, which internally uses Letterplace techniques (and Letterplace Gr\"obner bases), due to La Scala and Levandovskyy. Interesting examples and applications accompany our presentation.
\end{abstract}
\begin{keyword}
	Non-commutative algebra; Coefficients in rings; Gr\"obner bases; Algorithms
\end{keyword}
\end{frontmatter}

\section*{Introduction}
\noindent
We present an extension of our recent ISSAC paper \cite{LMZ20} on the computations of Gr\"obner(-Shirshov) bases over free associative algebras like $\IZ\langle X\rangle$. In the extended version we have added and proved new results for non-commutative Gr\"obner bases over rings with zero-divisors using factorization and lifting techniques. The proof of \Cref{BuchbergerCriterionOverNonCommutativeRings} received substantial enhancements, since it is essential for the correctness of our algorithm. We added details to the proof of \Cref{SChainCrit} and showed the corresponding \Cref{GChainCrit}. New examples and applications are introduced in \Cref{IwahoriHeckeA3} and \Cref{Binomial}. Older examples are revisited and enhanced.
Lemma 2.9 from the original paper \cite{LMZ20} on the length bound $2d-1$ (where $d$ is the longest length of polynomials in a basis), which is needed to establish the finiteness of a strong Gr\"obner basis, is recasted as 
\Cref{finiteGB}. The latter is supported by the new \Cref{LemmaBoundRings}, which demonstrates, that the lowest length bound is  $3d-1$ indeed.

By 2010's the techniques based on Gr\"obner bases were well-established in the sciences and applications were widely known. A number of
generalizations of them to various settings have
been proposed and discussed. However, especially when it came to non-commutative and non-associative cases, generalizations of, in  particular, Gr\"obner bases, were often met with sceptical expressions like ``as expected'', ``straightforward'', ``more or less clear'' and so on. 
This is not true in general since
generalizations to various flavours of non-commutativity require deep analysis of procedures (and in case of provided termination, algorithms) based on intricate knowledge of properties of rings and modules over them.
Characteristically, in this paper we demonstrate in e.g.\  \Cref{BspProblemeUberZnonComm} and \Cref{ExProblemSPoly} how \emph{intrinsically different} Gr\"obner bases over $\IZ\langle X \rangle$ are even when compared with Gr\"obner bases over $\IQ\langle X \rangle$, not taking the commutative case into account. An example can illustrate this better than a thousand words: 
\begin{example}
Consider the set $F=\{2x,3y\}$. 
While taken over $\IZ\langle x, y \rangle$, 
it has a finite strong Gr\"obner basis $\{2x,3y,yx,xy\}$ with respect to any well-ordering. On the other hand, 
considered over $\IZ\langle x, y, z_1, \ldots, z_m \rangle$ for any $m\geq 1$, $F$ has an infinite Gr\"obner basis, which contains e.g.\  
$x z_i^k y$ and $y z_i^k x$ for any natural $k$.
\end{example}
\noindent
In his recent articles and in the book \cite{MoraBook4} Teo Mora has presented "a manual for creating your own Gr\"obner bases theory" over \emph{effective} associative rings. This development is hard to underestimate, as it presents a unifying theoretical framework for handling very general rings. 
In particular, we can address the Holy Grail of computational algebra,
that is the unified algorithmic treatment of finitely presented modules
over the rings like
\[
R = \left( \IZ \langle Y \rangle / J \right) \langle X \rangle/ I
\]
where $Y$ and $X$ are finite sets of variables,  $J$ is a two-sided ideal from the free ring $\IZ \langle Y \rangle$ and 
$I$ is a two-sided ideal from the associative 
ring $\left( \IZ \langle Y \rangle / J \right) \langle X \rangle$. The extension of $\left( \IZ \langle Y \rangle / J \right)$ with $X$ and $I$ can be iterated. In order to
compute within such a ring, it turns out to be enough to
have two-sided Gr\"obner bases over $\IZ \langle Z \rangle$ for a finite set of variables $Z$, with respect to -- among other -- block elimination orderings. Then, indeed, the concrete computation, still valid over $R$ will take place in $\IZ \langle Y \cup X \rangle/ (I+J)$. Furthermore, over the factor-algebra $R$ one needs left and right and two-sided (also called  bilateral) Gr\"obner bases for ideals and submodules of free bimodules.
We provide these components over fields and over rings $\IZ$.

\newpage

The theory of non-commutative Gr\"obner bases was developed by many prominent scientists since the Diamond Lemma of G.~Bergman \cite{BG}; notably important are the papers my Mora \cite{mora1,mora2}. Especially L.~Pritchard \cite{P96} proved versions of the PBW Theorem  and advanced the theory of bimodules, also over rings. On the other hand, procedures and even algorithms related to Gr\"obner bases in such frameworks are still very complicated. Therefore, when aiming at implementation, one faces the classical dilemma: generality versus performance. Perhaps the most general implementation, existing nowadays, is the {\sc JAS} system by H.~Kredel \cite{JAS, Kredel2015}. In our designs we balance the generality with the performance; based on {\sc Singular}, we utilize its long, successful and widely recognized  experience with data structures and algorithms in commutative algebra. Notably, the recent years have seen the in-depth development of Gr\"obner bases in commutative algebras with coefficients in principal ideal rings (O.~Wienand, G.~Pfister, A.~Fr\"uhbis-Kr\"uger, A.~Popescu, C.~Eder, T.~Hofmann and others), see e.g.\  
\cite{EderHofm19,EderPfisterPopescu16,EderPfisterPopescu18,Lichtblau12}. This required massive changes in the structure of algorithms; ideally, one has one code for several instances of Gr\"obner bases with specialization to individual cases.
In particular, the very generation of critical pairs and the criteria for discarding them without much effort were intensively studied. These developments were additional motivation for us in the task of attacking Gr\"obner bases in free algebras over commutative principal ideal rings, with $\IZ$ at the first place. Currently, to the best of our knowledge, no computer algebra system 
is able to do such computations. Also, a number of highly interesting applications wait to be solved: in studying representation theory of a finitely presented algebra (i.e.\ the one, given by generators and relations), computations over $\IZ$ remain valid after specification to \emph{any} characteristic and thus encode a universal information, see for example \Cref{IwahoriHeckeA3}.
In the system {\sc Felix} by Apel et al. \cite{Felix}, such computations were experimentally available, though not documented. In his paper \cite{Apel2000}, Apel demonstrates Gr\"obner bases of several nontrivial examples over $\IZ\langle X \rangle$, the correctness of which we can easily confirm now.

Our secret weapon is the \emph{Letterplace technology}
\cite{LL09,LL13,LSS13,LaScala14}, which allows the usage of commutative data structures at the lowest level of algorithms. We speak, however, in theory, the language of free algebras over rings, since this is mutually bijective with the language of Letterplace.

This paper is organized as follows: In the first chapter we establish the notations which are necessary when dealing with polynomial rings. Subsequently, in the second chapter we generalize the notion of Gr\"obner bases for our setup, present a theoretical version of Buchberger's algorithm and give examples to visualize significant differences compared to the field case or the commutative case. Implementation of Buchberger's algorithm depends on and benefits from the 
 gentle handling of critical pairs, which we will discuss in the third chapter. This is followed up by computational examples, applications and discussion on the implementational aspects. 

\section{Preliminaries}
\noindent
All rings are assumed to be associative and unital, but not necessarily commutative. 
We want to discuss non-commutative Gr\"obner bases over the integers $\IZ$. Equivalently one can take any commutative Euclidean domain $\mathcal{R}$. This concept can be extended to Euclidean rings. It was done in \cite{EderHofm19} for the commutative case with so-called annihilator polynomials.

We work towards an implementation and therefore we are interested in \emph{algorithms}, which \emph{terminate} after a finite number of steps. Since $\IZ\langle X \rangle$ is not Noetherian for $|X|\geq 2$, there exist finite generating sets whose Gr\"obner bases are infinite with respect to any monomial well-ordering. Therefore, a typical computation of a Gr\"obner basis is executed subject to the \emph{length bound} (where length is meant literally, applied to \emph{words} from the free monoid $\langle X \rangle$), specified in the input, and therefore terminates per assumption. Thus, we talk about \emph{algorithms} in this sense.

Our main goal is to obtain an algorithm to construct a Gr\"obner basis over such a ring, finding or adjusting criteria for critical pairs and setting up an effective method to implement Buchberger's algorithm in the computer algebra system \textsc{Singular}. The problem of applying the statements of commutative Gr\"obner basis over Euclidean domains and principal ideal rings, such as in \cite{EderPfisterPopescu18,EderPfisterPopescu16,Lichtblau12,MarkwigRenWienand15}, are divisibility conditions of leading monomials.

Let $X=\{x_1, \ldots,x_n\}$ denote the finite alphabet with $n$ letters. We set $\mathcal{P}=\mathcal{R}\langle X \rangle$, the free $\mathcal{R}$-algebra of $X$, where all words on $X$ form a 
basis $\mathcal{B}=\langle X \rangle$
of $\mathcal{P}$ as a free $\mathcal{R}$-module. The empty word in $\mathcal{B}$ as well as the neutral element of $\mathcal{R}$ are both denoted by $1$. From now on we say ``$\mathcal{B}$ is an $\mathcal{R}$-basis''.
Moreover, let $\mathcal{P}^e=\mathcal{P}\otimes_\mathcal{R} \mathcal{P}^{\mbox{opp}}$ be the free enveloping $\mathcal{R}$-algebra with basis $\mathcal{B}^e=\{u \otimes v \, \vert \, u, v \in \mathcal{B} \}$. The natural action $\mathcal{P}^e \times \mathcal{P} \to \mathcal{P}$, 
$(u \otimes v, t) \mapsto (u\otimes v)\cdot  t := u t v$ makes a bimodule $\mathcal{P}$ into a left $\mathcal{P}^e$-module.  
We call the elements of $\mathcal{B}$ \textbf{monomials} of $\mathcal{P}^e$. Note, that the tensor product is employed to facilitate uniformly computations with left, right and two-sided ideals and modules. 

Let $\preceq$ be a 
monomial well-ordering on $\mathcal{B}$.
With respect to $\preceq$,
a polynomial $f\in\mathcal{P}\setminus\{0\}$ 
has a \textbf{leading coefficient} $\LC(f)\in \mathcal{R}$,
a \textbf{leading monomial} $\LM(f)\in\mathcal{B}$ and a \textbf{leading term} $\LT(f)=\LC(f) \LM(f)\neq 0$. 
We denote by $\vert w \vert$ the length of the word $w \in \mathcal{B}$. The length corresponds to the total degree in the commutative case.

An ordering $\preceq$ is called \textbf{length-compatible}, 
if $u \preceq v$ implies $\vert u\vert \leq \vert v\vert$.
Every subset $\mathcal{G}\subseteq\mathcal{P}$ yields a two-sided ideal, the \textbf{ideal of leading terms} $\mbox{L}(\mathcal{G})=\langle \LT(f) \, \vert \, f\in \mathcal{G}\setminus \{0\}\rangle$.

The notions of leading coefficient, leading monomial and leading term carry over to an element $h\in\mathcal{P}^e$ by considering $h \cdot 1 \in \mathcal{P}$.

\begin{definition}\label{DefinitionOverlap}~\\
Let $u, v \in \mathcal{B}$. We say, that $u$ and $v$ have an \textbf{overlap}, if there exist monomials $t_1, t_2\in \mathcal{B}$, such that at least one of the four cases
\begin{align*}
(1)\,u t_1=t_2 v\hspace{.5cm}(2)\,t_1 u=v t_2\hspace{.5cm}(3)\,t_1 u t_2=v\hspace{.5cm}(4)\,u= t_1 v t_2
\end{align*}
holds. Additionally, we say, that $u$ and $v$ have a \textbf{nontrivial overlap}, if $(3)$ or $(4)$ holds, or if in $(1)$ or $(2)$ we have $\vert t_1\vert < \vert v\vert $ and $\vert t_2\vert < \vert u\vert$. In $(3)$, respectively $(4)$, we say, that $u$ \textbf{divides} $v$, respectively $v$ \textbf{divides} $u$.

The set of all elements, which are divisible by both $u$ and $v$, is denoted by $\textsc{cm}(u, v)\subseteq \mathcal{B}$ (\textsc{cm}: \emph{common multiple}).
The set of all elements which correspond to a minimal nontrivial overlap of $u$ and $v$ is denoted by $\textsc{lcm}(u, v)\subseteq \mathcal{B}$ (\textsc{lcm}:
\emph{least ...}), i.e.\ $t\in \textsc{lcm}(u, v)$ if and only if there exist $\tau_u, \tau_v \in \mathcal{B}^e$, such that $t=\tau_u \cdot u= \tau_v\cdot v$ represent nontrivial overlaps of $u$ and $v$, and for all $\tilde{t} \in \textsc{lcm}(u, v)$ with $\tilde{t}= \tau \cdot t $ for some $\tau \in \mathcal{B}^e$ we have $t=\tilde{t}$ and $\tau=1\otimes 1$. Should there only be trivial overlaps, then we set $\textsc{lcm}(u, v)=\emptyset$.
\end{definition}
\noindent
Fix a monomial well-ordering $\preceq$ on $\mathcal{B}$ and let $f, g\in \mathcal{P}$. Then if $\LM(g)$ divides $\LM(f)$, it follows that $\LM(g)\preceq \LM(f)$. To understand the set $\textsc{lcm}(u, v)\subseteq\mathcal{B}$, consider the following example.
\newpage
\begin{example}~\\
Let $u=xy$ and $v=yzx$ be words in the alphabet $\{ x, y, z \}$. There are four minimal overlaps of $u$ and $v$:

\begin{figure}[h!]
    \centering
    \begin{tabular}{cc|ccc}
        $x$ & $y$ & & & \\
        & & $y$ & $z$ & $x$ \\
    \end{tabular}
    \quad
    \begin{tabular}{c|c|cc}
        $x$ & $y$ \\
        & $y$ & $z$ & $x$ \\
    \end{tabular}
    \qquad
    \begin{tabular}{cc|c|c}
        & & $x$ & $y$ \\
        $y$ & $z$ & $x$ & \\
    \end{tabular}
    \qquad
    \begin{tabular}{ccc|cc}
        & & & $x$ & $y$ \\
        $y$ & $z$ & $x$ & & \\
    \end{tabular}
\end{figure}
\noindent
Among these four, two are trivial, namely $xyyzx$ and $yzxxy$, and two are nontrivial, namely $xyzx$ and $yzxy$. The set $\textsc{cm}(u,v)$ consists of all elements of $\mathcal{B}=\langle x, y, z \rangle$, which contain one of these four as a subword. The set of all minimal nontrivial overlaps is $\textsc{lcm}(u, v)=\{ xyzx, yzxy \}$.
\end{example}

\section{Non-commutative Gr\"obner Bases}
\noindent
Classically, a Gr\"obner basis for an ideal is a finite subset, whose leading terms generate the leading ideal. In the field case, this guarantees the existence of a so-called Gr\"obner representation, which will be recalled subsequently, and for any $f\in \mathcal{I}\setminus\{0\}$ it also guarantees the existence of an element $g\in \mathcal{G}$, such that $\LT(g)$ divides $\LT(f)$.

\begin{definition}~\\
Let $f, g \in \mathcal{P}\setminus\{0\}$, $\mathcal{G} \subseteq \mathcal{P}\setminus \{0\}$ a countable set and $\mathcal{I} \subseteq \mathcal{P}$ a two-sided ideal. 
We fix a monomial well-ordering $\preceq$ on $\mathcal{B}$.

$\mathcal{G}$ is called a \textbf{Gr\"obner basis} for $\mathcal{I}$, if $\LIdeal (\mathcal{I})\subseteq \LIdeal(\mathcal{G})$.

$\mathcal{G}$ is called a \textbf{strong Gr\"obner basis} for $\mathcal{I}$, if $\mathcal{G}$ is a Gr\"obner basis for $\mathcal{I}$ and for all $ f' \in \mathcal{I}\setminus\{0\}$ there exists $g' \in \mathcal{G}$, such that $\LT(g')$ divides $\LT(f')$ in $\mathcal{P}$.

We say that $f$ has a \textbf{strong Gr\"obner representation} w.r.t. $\mathcal{G}$, if $f=\sum_{i=1}^m h_i g_i$ with $m \in \IN, g_i \in \mathcal{G},  h_i \in \mathcal{P}^e\setminus\{0\}$ and there exists a unique $1\leq j \leq m$, such that $\LM(f)=$ $\LM(h_j g_j)$ and $\LM(f)\succ $ $\LM(h_i g_i)$ for all $i \neq j$.

If $\mathcal{R}$ is Euclidean, then we say that $g$ \textbf{lm-reduces} $f$, if $\LM(g)$ divides $\LM(f)$ with $\LM(f)=\tau \LM(g)$ for some $\tau \in \mathcal{B}^e$ and there are $a, b\in \mathcal{R}$, $a\neq 0$ and $\vert b \vert < \vert \LC(f) \vert$ in the Euclidean norm, such that $\LC(f)=a$\, $\LC(g)+b$. Moreover, the \textbf{lm-reduction} of $f$ by $g$ is given by $f-a\tau g$.
\end{definition}
\noindent
For our implementation we require lm-reductions, which are the key to obtain a remainder after division through a finite generating set $\mathcal{G}$ for an ideal and they are used in Buchberger's algorithm to construct a Gr\"obner basis from $\mathcal{G}$. Therefore, $\mathcal{R}$ is from now on an Euclidean ring. In this sense, the point of a Gr\"obner basis is to deliver a unique remainder when dividing through it. Since we operate in a polynomial ring of multiple variables, the expression ``reduction'' is more justified than ``division'' to describe a chain of lm-reductions. The outcome of such a reduction, or the ``remainder of the division'', is then known as a \textbf{normal form}.

The following normal form algorithm uses lm-reductions and can be compared to the normal form algorithms, which is used for algebras over fields in \cite{vikThesis}.

\newpage

\begin{algorithm}
\textsc{NormalForm}\\
\noindent\rule[4pt]{\linewidth}{.5pt}
\textbf{input}: $f\in \mathcal{R}\langle X \rangle\setminus\{0\}$, $\mathcal{G}\subseteq \mathcal{R}\langle X \rangle$ finite and partially ordered\\
\textbf{output}: normal form of $f$ w.r.t. $\mathcal{G}$\\
$01$:\hspace{0.2cm}$h=f$\\
$02$:\hspace{0.2cm}\textbf{while} $h\neq 0$ \textbf{and} $\mathcal{G}_h=\{g\in \mathcal{G} \mid g \mbox{ lm-reduces } h \} \neq \emptyset$ \textbf{do}\\
$03$:\hspace{0.7cm}choose $g \in \mathcal{G}_h$\\
$04$:\hspace{0.7cm}compute $a,b\in\mathcal{R}$ with: $a\neq 0$, $\LC(h)=a \LC(g)+b$ and $\vert  b \vert < \vert \LC(h) \vert$\\ 
$05$:\hspace{0.7cm}compute $\tau \in \mathcal{B}^e$ with: $\LM(h)=\tau  \LM(g)$\\
$06$:\hspace{0.7cm}$h=h-a \tau g$, the lm-reduction of $h$ by $g$\\
$07$:\hspace{0.2cm}\textbf{end while}\\
$08$:\hspace{0.2cm}\textbf{return} $h$\\
\noindent\rule[4pt]{\linewidth}{.5pt}
\end{algorithm}

Every normal form of the zero polynomial is zero. Termination (due to the usage of a well-ordering) and correctness (division with remainder) are completely analogous to the proof in \cite{vikThesis}. 
The output of the algorithm is not unique in general, for it depends on the choice of elements $g\in\mathcal{G}_h$ which are used for the reduction.

One can check, that the proof of the following theorem carries over verbatim from the commutative case in \cite{Lichtblau12}.

\begin{theorem}\label{CharacterizationOfGBNonCommEuclidRing} (Generalization of \cite[Theorem 9]{Lichtblau12})\\
Let $\mathcal{G} \subseteq \mathcal{P}\setminus\{0\}$ and $\{0\} \neq \mathcal{I} \subseteq \mathcal{P}$ an ideal. Then the following statements with respect to $\mathcal{G}$ and a fixed monomial well-ordering $\preceq$ are equivalent:
\begin{enumerate}
\item $\mathcal{G}$ is a strong Gr\"obner basis for $\mathcal{I}$.
\item Every $f\in \mathcal{I}\setminus\{0\}$ has a strong Gr\"obner representation.
\item Every $f\in \mathcal{P}\setminus\{0\}$ has a unique normal form after reduction.
\end{enumerate}
\end{theorem}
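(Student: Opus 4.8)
The plan is to establish the equivalences by the cyclic chain $(1)\Rightarrow(2)\Rightarrow(3)\Rightarrow(1)$, following the commutative treatment of \cite{Lichtblau12} but with every divisibility of monomials witnessed by a two‑sided monomial multiplier $\tau\in\mathcal{B}^e$; the facts used repeatedly are that $\preceq$ is a well‑ordering and that $\LM(\tau g)=\tau\cdot\LM(g)$ and $\LC(\tau g)=\LC(g)$ for $g\in\mathcal{P}\setminus\{0\}$. For $(1)\Rightarrow(2)$: given $f\in\mathcal{I}\setminus\{0\}$, the strong Gr\"obner basis property supplies $g_1\in\mathcal{G}$ together with $\tau_1\in\mathcal{B}^e$ and $a_1\in\mathcal{R}\setminus\{0\}$ with $\LM(f)=\tau_1\LM(g_1)$ and $\LC(f)=a_1\LC(g_1)$, so that $f-a_1\tau_1g_1\in\mathcal{I}$ is either $0$ or has strictly smaller leading monomial. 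Iterating and invoking the well‑ordering terminates the process, giving $f=\sum_{i=1}^m a_i\tau_i g_i$ with $\LM(a_1\tau_1g_1)=\LM(f)\succ\LM(a_2\tau_2g_2)\succ\cdots$; putting $h_i:=a_i\tau_i\in\mathcal{P}^e\setminus\{0\}$ produces a strong Gr\"obner representation whose distinguished index is $j=1$.

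For $(2)\Rightarrow(3)$: a normal form exists because \textsc{NormalForm} terminates — each iteration strictly decreases the pair $(\LM(h),|\LC(h)|)$ in the lexicographic product of $\preceq$ with the order on the Euclidean norm, which is well‑founded. For uniqueness, let $h,h'$ be two normal forms of $f$; both $f-h$ and $f-h'$ are $\mathcal{R}$‑combinations of elements $\tau g$ with $g\in\mathcal{G}$, so $d:=h-h'$ lies in $\langle\mathcal{G}\rangle\subseteq\mathcal{I}$. Suppose $d\neq 0$. By $(2)$, $d$ has a strong Gr\"obner representation $d=\sum h_ig_i$; since only the distinguished summand $h_jg_j$ contributes to the coefficient of $\LM(d)$, and $\LM(h_jg_j)$ is of the form $u\,\LM(g_j)\,v$ for a monomial $u\otimes v$ occurring in $h_j$, we obtain $\LM(g_j)\mid\LM(d)$ and, after collecting coefficients, $\LC(g_j)\mid\LC(d)$, i.e.\ $\LT(g_j)\mid\LT(d)$. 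One then argues — comparing the parts of $h$ and $h'$ lying $\succ\LM(d)$, which necessarily coincide — that $g_j$ lm‑reduces whichever of $h,h'$ realizes $\LM(d)$ as a monomial, contradicting that it is a normal form; hence $d=0$.

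For $(3)\Rightarrow(1)$: for $g\in\mathcal{G}$, $\tau\in\mathcal{B}^e$ and $a\in\mathcal{R}\setminus\{0\}$ the element $a\tau g$ lm‑reduces to $0$ in a single step, so by uniqueness $\NF(a\tau g)=0$; a $\preceq$‑induction, deleting at each stage a summand that realizes the current leading monomial, extends this to $\NF(d)=0$ for every $d\in\langle\mathcal{G}\rangle$, which we take to equal $\mathcal{I}$. Thus every $f\in\mathcal{I}\setminus\{0\}$ reduces to $0$; reading the reduction chain at the step that finally annihilates the coefficient of $\LM(f)$ exhibits some $g\in\mathcal{G}$ with $\LM(g)\mid\LM(f)$ whose leading coefficient divides that coefficient, and choosing $f$ with $\preceq$‑minimal leading monomial among the potential failures of strong divisibility upgrades this to $\LT(g)\mid\LT(f)$; since also $\LT(f)\in\LIdeal(\mathcal{G})$ for every such $f$, we get $\LIdeal(\mathcal{I})\subseteq\LIdeal(\mathcal{G})$, so $\mathcal{G}$ is a strong Gr\"obner basis.

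The delicate point — the one I expect to require the real work, and where the transcription from \cite{Lichtblau12} has to be checked rather than merely asserted — is the uniqueness half of $(2)\Rightarrow(3)$, and dually the extraction of strong divisibility in $(3)\Rightarrow(1)$. Because \textsc{NormalForm} only rewrites leading terms and the Euclidean quotient and remainder in line 04 must be fixed by a deterministic rule, one cannot short‑cut by declaring the difference of two normal forms fully reduced; instead one has to track carefully how leading coefficients propagate along a chain of lm‑reductions and how the top parts of two competing normal forms compare. Everything else is a routine, if notationally heavier, rerun of the commutative argument.
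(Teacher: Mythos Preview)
The paper itself gives no proof here: it merely asserts that the commutative argument from \cite{Lichtblau12} ``carries over verbatim''. Your cyclic scheme $(1)\Rightarrow(2)\Rightarrow(3)\Rightarrow(1)$ is therefore exactly the intended route, and your treatment of $(1)\Rightarrow(2)$ is correct.

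The gap is in $(2)\Rightarrow(3)$, and it is more serious than your closing paragraph suggests. From $d=h-h'\in\mathcal{I}\setminus\{0\}$ and a strong Gr\"obner representation you correctly extract some $g_j\in\mathcal{G}$ with $\LM(g_j)\mid\LM(d)$ and $\LC(g_j)\mid\LC(d)$. But the step ``$g_j$ lm-reduces whichever of $h,h'$ realizes $\LM(d)$'' can fail: even when $\LM(d)=\LM(h)=\LM(h')$, knowing that $\LC(g_j)$ divides $\LC(h)-\LC(h')$ does not force $|\LC(g_j)|\le|\LC(h)|$ or $|\LC(g_j)|\le|\LC(h')|$, which is what lm-reducibility requires. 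A one-variable instance makes this concrete: $\mathcal{G}=\{2,\,x+1,\,x-1\}\subset\IZ\langle x\rangle$ is a strong Gr\"obner basis of $\mathcal{I}=\langle 2,\,x+1\rangle$, yet reducing $f=x$ once by $x+1$ yields the normal form $-1$, while reducing by $x-1$ yields the normal form $1$, and neither $\pm1$ is lm-reducible by any element of $\mathcal{G}$. So with the paper's \textsc{NormalForm} (leading-term reduction only, free choice in line~03), statement~(3) is not a consequence of~(1); whatever makes Lichtblau's Theorem~9 go through --- e.g.\ reducing \emph{all} terms rather than just the leading one, fixing a canonical remainder system, or assuming $\mathcal{G}$ minimal --- has not been transported into the present definitions and therefore cannot be supplied by ``tracking coefficients more carefully''. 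Your $(3)\Rightarrow(1)$ inherits the same weakness, since the induction you sketch relies on uniqueness to propagate $\NF=0$ across sums.
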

\noindent
An earlier non-commutative version has also been proven by Pritchard for ``weak'' Gr\"obner bases in \cite{P96}.

A strong Gr\"obner basis can be computed with Buchberger's algorithm using syzygy-relations between leading terms of generating polynomials. In the field case, the computation is done with S-polynomials. It is known from the commutative case over rings \cite{Lichtblau12}, that it does not suffice to take so called ``syzygy polynomials'' as in \Cref{Definition_S_and_G_Polynomial} to obtain a strong Gr\"obner basis. To see this, let $f=3x$, $g=2y$ and $\mathcal{I}=\langle f,g\rangle\subseteq \IZ\langle x, y \rangle$. Then every syzygy-polynomial of $f$ and $g$ is zero, but clearly $x y = f y - x g \in \mathcal{I}$ has a leading term which is neither divisible by $\LT(f)$ nor $\LT(g)$. Thus, $\{f, g\}$ is not a strong Gr\"obner basis for $\mathcal{I}$. The problematic polynomial $x y$ is constructed by looking at the greatest common divisor of the leading coefficients of $f$ and $g$.

\begin{definition}\label{Definition_S_and_G_Polynomial}~\\
Let $f, g \in \mathcal{P} \setminus\{0\}$ with $\textsc{lcm}(\LM(f), \LM(g))\neq \emptyset$ and choose $\tau_f, \tau_g \in \mathcal{B}^e$, such that $\tau_f \LM(f)=\tau_g \LM(g) \in \textsc{lcm}(\LM(f), \LM(g))$. Furthermore, let $a=\lcm(\LC(f),\LC(g))$ and $a_f, a_g \in \mathcal{R}$, such that $a=a_f \LC(f) = a_g \LC(g)$; let $b=\mbox{gcd}(\LC(f),\LC(g))$ and $b_f, b_f \in \mathcal{R}$, such that $b=b_f \LC(f)+b_g \LC(g)$\footnote{This is the B\'{e}zout identity for the leading coefficients.}. 
\newpage
In an Euclidean domain, the least common multiple 
and $a_f, a_g$ are determined uniquely up to a unit, 
$b$ is unique as a greatest common divisor, 
but the B\'{e}zout coefficients $b_f,b_g$ may not be unique, depending on the implementation of the Euclidean algorithm.\\

\noindent
We define a \textbf{first type S-polynomial of $f$ and $g$ with respect to $t$} as
\begin{align*}
\textup{spoly}_1^t(f, g):= a_f \tau_f f - a_g \tau_g g
\end{align*}
and a \textbf{first type G-polynomial of $f$ and $g$  with respect to $t$} as
\begin{align*}
\textup{gpoly}_1^t(f, g):= b_f \tau_f f + b_g \tau_g g.
\end{align*}
If such $\tau_f, \tau_g$ do not exist, we set the first type S- and G-polynomials to zero. Since two monomials may have several nontrivial overlaps, these $\tau_f,\tau_g$ are not unique. More precisely, this follows from the fact that $\mathcal{P}$ is not a unique, but a {\bf finite factorization domain} \cite{BHL-FFD}.
\end{definition}
\noindent
So far everything seems to work out as in the commutative case. We consider some examples to see that this impression is wrong.
\begin{example}\label{BspProblemeUberZnonComm}~\\
Let $f=2 x y, \, g= 3 y z \in \IZ[x, y, z]$, i.~e. in the commutative ring. We need compute an S-polynomial $3 f z-2 x g=0$ and a G-polynomial
\begin{align*}
\textup{gpoly}(f,g)=(-1) \cdot 2 x y \cdot z + 1 \cdot x \cdot 3y z=x y z.
\end{align*}
Since the latter does not reduce to zero, we add it to $\mathcal\{f, g\}$ and obtain a strong Gr\"obner basis $\{2xy, 3yz, xyz\}$ for $\mathcal{I}\subset \IZ[x, y, z]$.

Same computations need to be done for $f=2 x y, \, g= 3 y z \in \IZ\langle x, y, z \rangle$. But additionally, for every $w \in \mathcal{B}$ 
\begin{align*}
\textup{gpoly}'(f, g)=(-1) \cdot 2xy \cdot w \cdot yz + 1 \cdot xy \cdot w  \cdot 3yz=xywyz
\end{align*}
is also a G-polynomial of $f, g$ and for many different monomials $w$ the corresponding  polynomial $xywyz$ will be added to the basis. Note, that there is no finite Gr\"obner basis for $\mathcal{I}$ (since in particular $\{xy^{k}z : k\geq 2\}$ is a subset of any Gr\"obner basis). Thus we have to be satisfied with computing up to a fixed length bound for monomials, occuring in polynomials of the basis.
Note that in the case of a G-polynomial  we computed it in the canonical way, i.~e. by looking for a nontrivial overlap of $xy$ and $yz$. In the case of $\textup{gpoly}'$ we ignored this overlap. In the commutative case this is irrelevant, because $\textup{gpoly}(f, g)$ divides $\textup{gpoly}'(f, g)$. Furthermore, in the field case this is also irrelevant, because we do not need G-polynomials at all.
\end{example}
\noindent
Similar phenomena occur for S-polynomials.
\begin{example}\label{ExProblemSPoly}~\\
Let $f=2xy+x, \, g=3yz+z \in \IZ\langle x, y, z \rangle$. Then $\textup{spoly}(f, g)=3fz-2xg=xz$ is an S-polynomial of $f$ and $g$. Now consider
\begin{align*}
\textup{spoly}^w(f, g)=3fwyz-2xywg=3xwyz-2xywz
\end{align*}
for a monomial $w\in \mathcal{B}$. 
If $wy \neq yw$, the two monomials differ and then
\begin{align*}
(\textup{spoly}^w(f, g)-xwg)+fwz=0,
\end{align*}
meaning that this $\textup{spoly}$ reduces to zero. 
But for all $w$ such that $wy = yw$, i.~e.
for $w = y^k, k \geq 0$ we have
\begin{align*}
\textup{spoly}^w (f, g)= 3xy^k yz-2xy y^k z = 
xy^{k+1}z,
\end{align*}
which does not reduce any further and thus has to be added to the basis.
Furthermore, for $f=2xy+x$ we see that
\begin{align*}
\textup{spoly}^w(f, f)=fwxy-xywf=xwxy-xywx \neq 0
\end{align*}
is an S-polynomial of $f$ with itself which might not reduce any further depending on $w$ and we require $\LM(f)w\LM(f)\in \textsc{cm}(\LM(f),\LM(f))$, although it is not contained in $\textsc{lcm}(\LM(f),\LM(f))$. 

Thus, in general {\bf even principal ideals do not have finite strong Gr\"obner bases}! Such behavior of S-polynomials does not occur for non-commutative polynomials over fields.

Note, that we do not consider any further extensions of the leading monomials, meaning that the S- and G-polynomial corresponding to $t\in \textsc{lcm}(\LM(f),\LM(g))$ or $\LM(f) w \LM(g)$ make any further (trivial) overlap relations $\tau t$ or $\tau (\LM(f) w \LM(g))$ for $\tau \in \mathcal{B}^e$ redundant. Therefore, in the definition of $\textsc{lcm}(x, y)$ we stress the importance of the minimality.
\end{example}
\noindent
The previous example shows that we have to consider all possible S- and G-polynomials, but those are infinitely many. Moreover, the set $\textsc{cm}(\LM(f), \LM(g))$ contains too many elements that are redundant whereas the set $\textsc{lcm}(\LM(f), \LM(g))$ is too small. The following definition is made to classify two types of S- and G-polynomials, namely those corresponding to nontrivial overlap relations and those corresponding to trivial ones.
\begin{definition}\label{DefiFirstSecondTypeSGPoly}~\\
Let $f, g \in \mathcal{P}\setminus\{0\}$, $w\in \mathcal{B}$ and $a_f, a_g, b_f, b_g \in \mathcal{R}$ as in \Cref{Definition_S_and_G_Polynomial} with $\lcm(\LC(f),\LC(g)) = a_f \LC(f) = a_g \LC(g)$ and $\mbox{gcd}(\LC(f),\LC(g))=b_f \LC(f)+b_g \LC(g)$.\\
We define the \textbf{second type S-polynomial} of $f$ and $g$ w.r.t. $w$ as
\begin{align*}
\textup{spoly}_2^w(f, g):=a_f  f w \LM(g) - a_g \LM(f)w g
\end{align*}
and the \textbf{second type G-polynomial} of $f$ and $g$ w.r.t. $w$ as
\begin{align*}
\textup{gpoly}_2^w(f, g):=b_f  f w \LM(g) + b_g  \LM(f)w g.
\end{align*}
\end{definition}
\noindent
\begin{remark}\label{Remark_First_Second_Type_S_And_G_Polynomial}~\\
Clearly, it only makes sense to consider first type S- and G-polynomials if there is a nontrivial overlap of the leading monomials. However, as 
\Cref{BspProblemeUberZnonComm} shows, we always need to consider second type S- and G-polynomials. For any $w \in \mathcal{B}$ we have $ \LM(f) w \LM(g)  \in \textsc{cm}(\LM(f), \LM(g))$ and $ \LM(g) w \LM(f)  \in \textsc{cm}(\LM(f), \LM(g))$, which are distinct in general. Therefore, we need to consider both $\textup{spoly}_2^w(f, g)$ and $\textup{spoly}_2^w(g, f)$ and the same holds for second type G-polynomials. Also, note that the set of first type S- and G-polynomials is finite, because our monomial ordering is a well-ordering, whereas the set of second type S- and G-polynomials is infinite. Therefore, in this
context the need to compute up to a fixed length bound for the occuring monomials appears again in a natural way.

It is important to point out, that the elements $\tau_f, \tau_g$ are not uniquely determined. Take for example $f=2xyx+y, g=3x+1$. Then $t:=xyx=\LM(f)=xy \LM(g)\in\textsc{lcm}(\LM(f),\LM(f))$, but also $t=\LM(g)yx$ and thus $\textup{spoly}_1^t(f, g)=-3f+2gyx=2yx-3y$ and $(\textup{spoly}_1^t)'(f, g)=-3f+2xyg=2xy-3y$ are both first type S-polynomials with different leading monomials.
\end{remark}
\noindent 
The following algorithm uses Buchberger's criterion \Cref{BuchbergerCriterionOverNonCommutativeRings} as a characterization for strong Gr\"obner bases, which we will prove subsequently. It computes S- and G-polynomials of both kind up to a fixed length bound $d\in\IN$ and reduces them with the algorithm \textsc{NormalForm} in order to obtain a strong Gr\"obner basis up to length $d$ for an input ideal given by a finite generating set.

\begin{samepage}
\begin{algorithm}
\textsc{BuchbergerAlgorithm}\\
\noindent\rule[4pt]{\linewidth}{.5pt}
\textbf{input}: $\mathcal{I}=\langle f_1, \ldots, f_k  \rangle \subseteq \mathcal{R}\langle X \rangle$, $d\in \IN$, \textsc{NormalForm}\\
\textbf{output}: strong Gr\"obner basis $\mathcal{G}=\mathcal{G}_d$ up to length $d$ for $\mathcal{I}$\\
$01$:\hspace{0.2cm}$\mathcal{G}=\{f_1, \ldots, f_k \}$\\
$02$:\hspace{.2cm}$\mathcal{L}=\{ \textup{spoly}_1^t(f_i, f_j), \textup{gpoly}_1^t(f_i, f_j)\,\vert\,\forall\, t~^{*},i,j \}$\\
$03$:\hspace{.2cm}$\mathcal{L}=\mathcal{L}\cup\{ \textup{spoly}_2^w(f_i, f_j), \textup{gpoly}_2^w(f_i, f_j)\,\vert\, \forall\, w~^{**},i,j \}$\\
$04$:\hspace{0.2cm}\textbf{while} $\mathcal{L}\neq \emptyset$ \textbf{do}\\
$05$:\hspace{0.7cm}choose $h \in \mathcal{L}$ \\
$06$:\hspace{0.7cm}$\mathcal{L}=\mathcal{L}\setminus \{h\}$\\
$07$:\hspace{0.7cm}$h=\mbox{\textsc{NormalForm}}(h, \mathcal{G})$\\
$08$:\hspace{0.7cm}\textbf{if} $h \neq 0$ \textbf{then}\\
$09$:\hspace{1.2cm}$\mathcal{G}=\mathcal{G}\cup \{h\}$\\
$10$:\hspace{1.2cm}\textbf{for} $g\in \mathcal{G}$ \textbf{do}\\
$11$:\hspace{1.7cm}$\mathcal{L}=\mathcal{L}\cup\{ \textup{spoly}_1^t(g, h), \textup{gpoly}_1^t(g, h)\,\vert\,\forall\, t~^{*} \}$\\
\hphantom{$30$:}\hspace{1.7cm}$\mathcal{L}=\mathcal{L}\cup \{ \textup{spoly}_1^t(h, g), \textup{gpoly}_1^t(h, g) \,\vert\,\forall\, t~^{*}\}$\\
\hphantom{$30$:}\hspace{1.7cm}$\mathcal{L}=\mathcal{L}\cup \{ \textup{spoly}_2^w(g, h), \textup{gpoly}_2^w(g, h) \,\vert\,\forall\, w~^{**}\}$\\
\hphantom{$30$:}\hspace{1.7cm}$\mathcal{L}=\mathcal{L}\cup \{ \textup{spoly}_2^w(h, g), \textup{gpoly}_2^w(h, g) \,\vert\,\forall\, w~^{**}\}$\\
$12$:\hspace{1.2cm}\textbf{end do}\\
$13$:\hspace{0.7cm}\textbf{end if}\\
$14$:\hspace{0.2cm}\textbf{end while}\\
$15$:\hspace{0.2cm}\textbf{return} $\mathcal{G}$\\
\noindent\rule[4pt]{\linewidth}{.5pt}
$~^{*}$ $t\in \textsc{lcm}(\lm(\bullet_1),\lm(\bullet_2))$, such that $\vert t \vert <d$\\
$~^{**}$ $w\in\mathcal{B}$, such that $\vert \LM(\bullet_1)\vert+ \vert w \vert + \vert \LM(\bullet_2)\vert < d$\\
The monomials $t$, which satisfy $~^{*}$, come from pairs of type $\tau_{\bullet_1},\tau_{\bullet_2} \in \mathcal{B}^e$. Those pairs are not unique and so all first type S- and G-polynomials w.r.t. $t$ are computed.
\end{algorithm}
\end{samepage}

For the algorithm to terminate we need the set $\mathcal{L}$ to eventually become empty. This happens, if and only if after finitely many steps every S- and G-polynomial based on any combination of leading terms has normal form zero w.r.t $\mathcal{G}$, i.e.\ there exists a chain of lm-reductions, such that the current S- or G-polynomial reduces to zero. However, lm-reductions only use polynomials of smaller or equal length and all of these are being computed. Therefore, the algorithm terminates.

For the correctness of the algorithm we still need a version of Buchberger's criterion. More precisely, we want $\mathcal{G}$ to be a Gr\"obner basis for $\mathcal{I}$, if and only if for every pair $f, g \in \mathcal{G}$ all their S- and G-polynomials reduce to zero. Moreover, we only want to consider first and second type S- and G-polynomials, i.e.\ only use $t\in \textsc{cm}(\LM(f), \LM(g))$, such that one of the following four cases
\begin{align*}
    (1)\,t=\LM(f) t_f'=t_g \LM(g)\hspace{.5cm}(2)\,t=\LM(f)=t_g \LM(g) t_g'\\
    (3)\,t=t_f \LM(f) = \LM(g) t_g'\hspace{.5cm}(4)\,t=t_f \LM(f) t_f'=\LM(g)
\end{align*}
holds
for $t_f, t_f', t_g, t_g' \in \mathcal{B}$. This excludes all cases where $t$ is not minimal, i.e.\ $t=\tau t' $ for $\tau\in \mathcal{B}^e$ and $t'$ satisfying one of the above four cases. Pritchard has proven in \cite{P96}, that for a generating set of the left syzygy module (which is not finitely generated in general) we may 
 use only minimal syzygies.

\begin{lemma}\label{BuchbergerCriterionOverNonCommutativeRings} (Generalization of \cite[Theorem 10]{Lichtblau12})\\
Let $\mathcal{G}\subseteq \mathcal{P}\setminus\{0\}$. Then $\mathcal{G}$ is a strong Gr\"obner basis for $\langle \mathcal{G} \rangle$, if and only if for every pair $f, g\in \mathcal{G}$ their first and second type S- and G-polynomials reduce to zero w.r.t. $\mathcal{G}$.
\begin{proof}
The idea of the proof goes back to \cite{Lichtblau12}; we only need to show the ``if'' part. Let $0 \neq f\in  \langle\mathcal{G}\rangle  =:\mathcal{I}$ with $f=\sum_i h_i g_i$ for some $h_i \in \mathcal{P}^e$ and $g_i\in\mathcal{G}$. We set $t:= \mbox{max}(\LM(h_i g_i))$ and $M:=\{i\in \IN \mid \LM(h_i g_i)=t\}$. Clearly $\LM(f)\preceq t$ and we may assume that there is no other representation of $f$ where $t$ is smaller. Without loss of generality let $M=\{1, \ldots, m\}$. Moreover, since the Euclidean norm induces a well-ordering,  we can choose a representation where $\sum_{i=1}^m \vert \LC(h_i) \LC(g_i)\vert $ is minimal w.r.t. $t$. If $M$ contains exactly one element, then $t=\LM(f)$ and we have a strong standard representation of $f$ w.r.t. $\mathcal{G}$. Suppose otherwise that $\mbox{card}(M)>1$. Then $t\succeq \LM(f)$. Note that $t=\LM(h_i g_i)=\LM(h_i )\LM( g_i)$ for $i\leq m$. Then there exist monomials $t_1, t_1', t_2, t_2' \in X$, such that $t=t_1 \LM(g_1) t_1'=t_2 \LM(g_2) t_2'$. This induces an overlap relation of the leading monomials, because then there exist $s_1, s_1', s_2, s_2' \in X$, such that for one of the possibilities
\begin{align*}
    T:=\LM(g_1) s_1'=s_2 \LM(g_2) \hspace{1cm} T:=\LM(g_1)=s_2 \LM(g_2) s_2'\\
    T:=s_1 \LM(g_1) = \LM(g_2) s_2' \hspace{1cm} T:=s_1 \LM(g_1) s_1'=\LM(g_2)
\end{align*}
we obtain $t=\tau T$ for some $\tau\in \mathcal{P}^e$. Moreover, let $\tau_1, \tau_2$ result from $s_1, s_1', s_2, s_2'$, such that $\tau_1 T = \LM(g_1), \tau_2 T= \LM(g_2)$. Furthermore, let 
\begin{align*}
a_1:=\dfrac{\lcm(\LC(g_1), \LC(g_2))}{\LC(g_1)},\hspace{.3cm} a_2:=\dfrac{\lcm(\LC(g_1), \LC(g_2))}{\LC(g_2)}
\end{align*}
and $d:= \mbox{gcd}(\LC(g_1), \LC(g_2))=b_1 \LC(g_1)+b_2 \LC(g_2)\in \mathcal{R}$ (the Bézout identity for the leading coefficients). Now if $T$ corresponds to a nontrivial overlap, then we can compute $\textup{spoly}_1^T (g_1, g_2)$, $\textup{gpoly}_1^T (g_1, g_2)$ or $\textup{spoly}_1^T (g_2, g_1)$, $\textup{gpoly}_1^T (g_2, g_1)$. Otherwise, there exists a $w\in \mathcal{B}$, such that $T=\LM(g_1) w \LM(g_2)$ or $T=\LM(g_2) w \LM(g_1)$. In this case we are interested in $\textup{spoly}_2^w (g_1, g_2)$, $\textup{gpoly}_2^w (g_1, g_2)$ or $\textup{spoly}_2^w (g_2, g_1)$, $\textup{gpoly}_2^w (g_2, g_1)$. By definition
\begin{align*}
\begin{array}{ll}
            & \textup{spoly}(g_1, g_2):=a_1 \tau_1 g_1 - a_2 \tau_2 g_2 \\
\hbox{and}  & \textup{gpoly}(g_1, g_2):=b_1 \tau_1 g_1 + b_2 \tau_2 g_2
\end{array}
\end{align*}
are first or second type S- and G-polynomials and $\LM(h_1)=\tau \tau_1$, $\LM(h_2)=\tau \tau_2 \in\mathcal{B}^e$. Choose $a, b\in \mathcal{R}\setminus \{0\}$, such that $\LC(h_1)\LC(g_1)+\LC(h_2) \LC(g_2)=ad$ and $\LC(h_1) =a b_1 + b a_1, \LC(h_2) =a b_2 - b a_2$. Since $\vert a_1 \LC(g_1)+a_2 \LC(g_2)\vert>0$ and by the triangle inequality, we obtain
\begin{align*}
&\vert \LC(h_1)\LC(g_1) \vert +\vert \LC(h_2)\LC(g_2) \vert\\
=&\vert (a b_1 + b a_1)\LC(g_1) \vert + \vert (a b_2 - b a_2)\LC(g_2)\vert\\
\geq& \vert  a b_1\LC(g_1)\vert + \vert b a_1 \LC(g_1)\vert +  \vert  a b_2\LC(g_2)\vert + \vert b a_2 \LC(g_2)\vert \\
>& \vert  a b_1\LC(g_1)\vert  +  \vert  a b_2\LC(g_2)\vert \geq  \vert  a b_1\LC(g_1)  +    a b_2\LC(g_2)\vert = \vert ad\vert,
\end{align*}
yielding $ \vert ad\vert < \vert \LC(h_1)\LC(g_1) \vert +\vert \LC(h_2)\LC(g_2) \vert $. Furthermore, we have
\begin{align*}
h_1 g_1 + h_2 g_2=&(\LC(h_1)\LM(h_1)+\mbox{tail}(h_1)) g_1 + (\LC(h_2)\LM(h_2)+\mbox{tail}(h_2))g_2\\
=&(a b_1 + b a_1) \tau \tau_1 g_1 + \mbox{tail}(h_1) g_1+(a b_2 - b a_2) \tau \tau_2 g_2 + \mbox{tail}(h_2) g_2\\
=&  a \tau (b_1 \tau_1 g_1 + b_2 \tau_2 g_2)+b \tau (a_1 \tau_1 g_1 - a_2 \tau_2 g_2) +\mbox{tail}(h_1) g_1+\mbox{tail}(h_2) g_2\\
=&  a \tau \,\textup{gpoly}(g_1, g_2)+b \tau \textup{spoly}(g_1, g_2) +\mbox{tail}(h_1) g_1+\mbox{tail}(h_2) g_2.
\end{align*}
Since the S- and the G-polynomial are of first or second type they reduce to zero w.r.t. $\mathcal{G}$. Hence we can write $h_1 g_1+ h_2 g_2 =\sum_j h_j' g_j$ for $h_j' \in \mathcal{P}^e$ and define $M':=\{ j \in \IN \mid \LM(h_j' g_j)=t \}$. Since $\LM(\tau \textup{spoly}(g_1, g_2))\prec t$, $\LM(\mbox{tail}(h_1) g_1)\prec t$ and $\LM(\mbox{tail}(h_2) g_2)\prec t$ we have
\begin{align*}
    \sum_{j\in M'} \vert \LC(h_j') \LC(g_j) \vert 
=   \vert \LC(a \,\tau \,\textup{gpoly}(g_1, g_2))\vert
=   \vert ad\vert 
<   \vert \LC(h_1)\LC(g_1) \vert +\vert \LC(h_2)\LC(g_2) \vert ,
\end{align*}
which contradicts the assumption that the leading coefficient of our original representation are minimal. Therefore, $M$ contains exactly one element and thus we have a strong Gr\"obner representation of $f$ w.r.t. $\mathcal{G}$, i.e. $\mathcal{G}$ is a strong Gr\"obner basis for $\mathcal{I}$.
\end{proof}
\end{lemma}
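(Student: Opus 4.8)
The plan is to prove the nontrivial ``if'' direction via the standard Buchberger-style argument: assume every first- and second-type S- and G-polynomial reduces to zero modulo $\mathcal{G}$, take an arbitrary $0\neq f\in\langle\mathcal{G}\rangle$, and show it admits a strong Gr\"obner representation; by \Cref{CharacterizationOfGBNonCommEuclidRing} this is equivalent to $\mathcal{G}$ being a strong Gr\"obner basis. First I would fix an expression $f=\sum_i h_i g_i$ with $h_i\in\mathcal{P}^e$, $g_i\in\mathcal{G}$, and among all such expressions choose one minimizing $t:=\max_i\LM(h_ig_i)$ with respect to $\preceq$, and then — crucially — among those further minimize the Euclidean-norm sum $\sum_{i\in M}\lvert\LC(h_i)\LC(g_i)\rvert$, where $M=\{i\mid\LM(h_ig_i)=t\}$. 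This double minimization (first in the monomial order, which is well-founded since $\preceq$ is a well-ordering; then in the Euclidean norm, which is also well-founded) is what licenses the descent-and-contradiction. If $\lvert M\rvert=1$, then necessarily $t=\LM(f)$ and we are done; so assume $\lvert M\rvert>1$.

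The core of the argument treats two indices $i=1,2\in M$. Since $t=\LM(h_1)\LM(g_1)=\LM(h_2)\LM(g_2)$, the two leading monomials $\LM(g_1),\LM(g_2)$ must overlap inside $t$; I would extract from this a minimal overlap monomial $T$ (either a nontrivial overlap in the sense of \Cref{DefinitionOverlap}, covered by first-type polynomials, or of the form $\LM(g_1)w\LM(g_2)$ resp.\ $\LM(g_2)w\LM(g_1)$, covered by second-type polynomials) together with $\tau,\tau_1,\tau_2\in\mathcal{P}^e$ such that $t=\tau T$, $\LM(h_1)=\tau\tau_1$, $\LM(h_2)=\tau\tau_2$, and $\tau_j T=\LM(g_j)$. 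Setting $a_j=\lcm(\LC(g_1),\LC(g_2))/\LC(g_j)$ and $d=\gcd=b_1\LC(g_1)+b_2\LC(g_2)$, the relevant S- and G-polynomials are $\mathrm{spoly}=a_1\tau_1g_1-a_2\tau_2g_2$ and $\mathrm{gpoly}=b_1\tau_1g_1+b_2\tau_2g_2$. The key algebraic identity is the linear-algebra rewriting: choose $a,b\in\mathcal{R}$ with $\LC(h_1)\LC(g_1)+\LC(h_2)\LC(g_2)=ad$ and $\LC(h_1)=ab_1+ba_1$, $\LC(h_2)=ab_2-ba_2$ (solvability here uses exactly the Bézout/$\lcm$ relations among $\LC(g_1),\LC(g_2)$), so that
\begin{align*}
h_1g_1+h_2g_2 = a\tau\cdot\mathrm{gpoly}(g_1,g_2) + b\tau\cdot\mathrm{spoly}(g_1,g_2) + \mathrm{tail}(h_1)g_1 + \mathrm{tail}(h_2)g_2 .
\end{align*}
Now $\LM(b\tau\,\mathrm{spoly})\prec t$ (the S-polynomial cancels the $t$-term), $\LM(\mathrm{tail}(h_j)g_j)\prec t$, and $a\tau\,\mathrm{gpoly}$ contributes $\LC=ad$ at monomial $t$; since $\mathrm{gpoly}$ and $\mathrm{spoly}$ reduce to zero by hypothesis, I substitute their reductions back into the sum to obtain a new expression $f=\sum_j h_j'g_j$. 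A triangle-inequality estimate — using $\lvert a_1\LC(g_1)+a_2\LC(g_2)\rvert>0$ so that the $b$-terms are genuinely present — gives $\lvert ad\rvert<\lvert\LC(h_1)\LC(g_1)\rvert+\lvert\LC(h_2)\LC(g_2)\rvert$, hence the new representation has the same $t$ but strictly smaller Euclidean-norm sum over its index set at level $t$ (after also absorbing the now-lower-order contributions), contradicting minimality. Therefore $\lvert M\rvert=1$, giving the strong Gr\"obner representation and the claim.

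The main obstacle — and the place where the excerpt says the proof ``received substantial enhancements'' — is the bookkeeping around replacing $\mathrm{spoly},\mathrm{gpoly}$ by their reduction-to-zero expressions and verifying that this genuinely lowers the minimized quantity rather than merely reshuffling it. One must be careful that the reductions of $\mathrm{spoly}$ and $\mathrm{gpoly}$ only introduce terms of $\LM\prec t$ (true because lm-reduction never raises the leading monomial and $\LM(\mathrm{spoly}),\LM(\mathrm{gpoly})\preceq t$ with the S-polynomial strictly below), so that the new index set at level $t$ is controlled and its norm sum equals $\lvert ad\rvert$ up to contributions that are themselves below $t$; and that the case analysis over which of $\mathrm{spoly}_1^T/\mathrm{gpoly}_1^T$ in either order, or $\mathrm{spoly}_2^w/\mathrm{gpoly}_2^w$ in either order, is the relevant one is genuinely exhaustive — this is where the four overlap cases of \Cref{DefinitionOverlap}, the non-uniqueness of $\tau_f,\tau_g$ flagged in \Cref{Remark_First_Second_Type_S_And_G_Polynomial}, and the reduction to \emph{minimal} overlaps (so that only first- and second-type polynomials appear, never a redundant $\tau t$) all have to be invoked. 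A secondary subtlety is ensuring the auxiliary elements $a,b$ can always be chosen: this requires that the ``syzygy + Bézout'' system over the Euclidean domain $\mathcal{R}$ is solvable, which is where one uses that $\mathcal{R}$ is a domain so that $\lcm$ and $\gcd$ behave as expected and $a_1\LC(g_1)-a_2\LC(g_2)=0$ exactly.
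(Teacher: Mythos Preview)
Your proposal is correct and follows essentially the same route as the paper's own proof: the double minimization (first on $t=\max_i\LM(h_ig_i)$, then on the Euclidean-norm sum $\sum_{i\in M}\lvert\LC(h_i)\LC(g_i)\rvert$), the extraction of a minimal overlap $T$ with $t=\tau T$, the rewriting of $h_1g_1+h_2g_2$ as $a\tau\,\mathrm{gpoly}+b\tau\,\mathrm{spoly}+\text{tails}$ via the choice $\LC(h_1)=ab_1+ba_1$, $\LC(h_2)=ab_2-ba_2$, and the norm-drop $\lvert ad\rvert<\lvert\LC(h_1)\LC(g_1)\rvert+\lvert\LC(h_2)\LC(g_2)\rvert$ contradicting minimality are exactly the steps the paper carries out. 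You have also correctly anticipated the delicate points the paper has to manage---the exhaustiveness of the first/second-type case split, the non-uniqueness of $\tau_f,\tau_g$, and the solvability of the $(a,b)$ system over the Euclidean domain.
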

\noindent
It is possible to define monic (that is, with
leading coefficients being $1$)
or rather reduced \cite{HLi12, Pauer07} Gr\"obner bases in our setup.
Let $\mathcal{G}\subseteq\mathcal{P}\setminus\{0\}$.
It is called a \textbf{reduced Gr\"obner basis}, if
\begin{enumerate}
    \item every $g\in\mathcal{G}$ has leading coefficient with signum $1$,
    \item $\mbox{L}(\mathcal{G}\setminus\{g\}) \subsetneq \mbox{L}(\mathcal{G})$ for every $g\in \mathcal{G}$ and
    \item $\LT(\tail(g))\notin \mbox{L}(\mathcal{G})$ for every $g\in \mathcal{G}$.
\end{enumerate}
The first condition states, that in the case of $\mathcal{R}=\IZ$ every element of a reduced Gr\"obner basis has leading coefficient in $\IZ_{+}$. The second condition is sometimes referred to as ``simplicity'' and means, that the leading ideal becomes strictly smaller when removing an element, thus no element is redundant. The third condition, ``tail-reduced'', is required in the classical field case with commutative polynomials to ensure that a reduced Gr\"obner basis is unique. However, this does not hold in our situation:
 Pritchard gave the following counterexample in \cite{P96}.

\begin{example}~\\
This example can be used for both the commutative and non-commutative case. Let $f=2y^2$, $g=3x^2+y^2$ and $\mathcal{I}=\langle f, g \rangle\subset \IZ[x,y]$. Then $\{f, g\}$ is a Gr\"obner basis for $\mathcal{I}$ with respect to any ordering $x\succ y$ (however, it is {\bf not} a strong  Gr\"obner basis!) and satisfies the above three conditions for reduced Gr\"obner bases. On the other hand, this is also true for $\{f, g'\}$ where $g'=g-f=3x^2-y^2$, so we have two different reduced Gr\"obner bases for $\mathcal{I}$. In the field case the polynomial $g$ is not tail-reduced.

Consider the same example and compute a strong Gr\"obner basis  in $\IZ[x,y]$ with respect to the degrevlex ordering with $x\succ y$. The result contains just one additional polynomial in both cases, namely 
$x^2 y^2+y^4$ for $\{f, g\}$ 
and 
$x^2 y^2 - y^4$ for $\{f, g'\}$. Unlike the case of fields, this shows that having rings as coefficients leads to non-uniqueness of reduced minimal Gr\"obner bases with normalized coefficients. 
The computations can be done with the following code (note, that the very detailed explanation of the {\sc Singular:Letterplace} usage is done  in the \Cref{ExampleExplained}):
\begin{verbatim}
LIB "freegb.lib"; 
ring r = ZZ,(x,y),dp; // one can use "integer" or ZZ notation for specifying Z
short=0;
option(redSB); option(redTail);
ideal I = 2*y^2, 3*x^2 + y^2;
I = twostd(I); // get new generator x^2*y^2+y^4
ideal J = 2*y^2, 3*x^2 - y^2;
J = twostd(J); // get new generator x^2*y^2-y^4
NF(I,J); // gives 0, that is I is contained in J
NF(J,I); // gives 0, that is J is contained in J
\end{verbatim}
\end{example}
\noindent
When implementing a version of Buchberger's algorithm, one should always aim to have a reduced Gr\"obner basis as an output. In fact this is more practical, because removing elements, which are not simplified or tail reduced speeds up the computation, since we do not need to consider them in critical pairs.

Now we turn our attention to length bounds, needed to certify that
a given finite set of polynomials of length at most $d$ is a Gr\"obner bases subject to a length-compatible monomial ordering. 
In the non-commutative case over fields such a bound is $2d-1$; see e.~g. \cite{LL09}, Cor. 3.19 for the case of a graded ideal, while the extension to the general case is apparent. 


In the initial ISSAC paper \cite{LMZ20} we have formulated the following result as a Lemma, but we have found 
problems with its proof, which require further 
deeper investigations. Therefore, despite the 
nice new example of the bound $3d-1$ in \Cref{LemmaBoundRings} and the evidence from numerous computed examples, we state the following result as a Conjecture.

\begin{conjecture}\label{finiteGB}~\\
Let $\mathcal{G}\subset \mathcal{P}\setminus\{0\}$ be a finite set, which contains polynomials involving monomials of the 
length at most $d\in\IN$. 
Assume moreover, that no new polynomials are added to $\mathcal{G}$, while computing a 
strong Gr\"obner basis with respect to a length-compatible monomial ordering up to length $3d-1$ with the \textsc{BuchbergerAlgorithm}. Then $\mathcal{G}$ is a finite strong Gr\"obner basis for $\langle \mathcal{G}\rangle$.
\end{conjecture}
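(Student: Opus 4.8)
The plan is to derive \Cref{finiteGB} from Buchberger's criterion \Cref{BuchbergerCriterionOverNonCommutativeRings}: it suffices to show that the hypothesis forces \emph{every} first- and second-type S- and G-polynomial of every pair $f,g\in\mathcal{G}$ to reduce to zero with respect to $\mathcal{G}$; then $\mathcal{G}$ is a strong Gr\"obner basis, and finiteness is part of the hypothesis. Since the ordering is length-compatible, a nontrivial overlap $t\in\textsc{lcm}(\LM(f),\LM(g))$ of two leading monomials of length at most $d$ has length at most $2d-1<3d-1$, so every first-type S- and G-polynomial is among those already examined by the \textsc{BuchbergerAlgorithm} up to length $3d-1$ and hence, by assumption, reduces to zero. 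Thus the whole statement comes down to the claim: \emph{if $\textup{spoly}_2^w(f,g)$ and $\textup{gpoly}_2^w(f,g)$ reduce to zero w.r.t.\ $\mathcal{G}$ for all $f,g\in\mathcal{G}$ and all $w\in\mathcal{B}$ with $|\LM(f)|+|w|+|\LM(g)|<3d-1$, then they reduce to zero for all $w\in\mathcal{B}$.} The G-polynomial case is entirely parallel to the S-polynomial case — replace $a_f,a_g$ by the B\'ezout coefficients $b_f,b_g$ and adjust the Euclidean-norm bookkeeping of \Cref{BuchbergerCriterionOverNonCommutativeRings} accordingly — so I only describe second-type S-polynomials.

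I would prove the claim by induction on $|w|$. Fix $f,g\in\mathcal{G}$ and $w$ with $\ell:=|\LM(f)|+|w|+|\LM(g)|\ge 3d-1$, so $|w|\ge\ell-2d\ge d-1$, and assume every second-type S-polynomial with strictly shorter connecting word already reduces to zero (the base case is exactly the hypothesis). Using the identity $\textup{spoly}_2^w(f,g)=a_f\,\tail(f)\,w\,\LM(g)-a_g\,\LM(f)\,w\,\tail(g)$, which holds because $a_f\LC(f)=a_g\LC(g)$, factor $w=w_1vw_2$ with a central block $v$ of length of order $d$. The geometric idea behind this factorization is that no $g'\in\mathcal{G}$ has a leading monomial long enough ($|\LM(g')|\le d$) to be matched with an occurrence inside $\LM(\textup{spoly}_2^w(f,g))$ that genuinely reaches across $v$. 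What one needs is a \emph{splicing lemma}: deleting a short block from the middle of $v$ produces a strictly shorter second-type S-polynomial $\textup{spoly}_2^{w'}(f,g)$, and a chain of lm-reductions taking $\textup{spoly}_2^{w'}(f,g)$ to zero (available by induction) can be \emph{re-inflated} — by re-inserting the deleted block at the corresponding position in every monomial along the chain — into a chain of lm-reductions taking $\textup{spoly}_2^{w}(f,g)$ to zero, using the same $g'\in\mathcal{G}$ and the same Euclidean cofactors at each step. This would close the induction.

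The hard part, and exactly where the argument sketched in \cite{LMZ20} turned out to be incomplete, is verifying the splicing lemma: one must show that lm-reductions of $\textup{spoly}_2^w(f,g)$ never ``migrate'' the leading monomial into the protected central block of $w$. Initially the leading monomial has one of the two shapes $m_f\,w\,\LM(g)$ or $\LM(f)\,w\,m_g$, where $m_f$, $m_g$ are monomials occurring in $\tail(f)$, $\tail(g)$, both of length at most $d$; a single lm-reduction can change this shape, and one has to control that, step after step, all rewriting stays within the outer $\le d$ letters on each side of the central core while the core is merely transported. This is precisely what pins the bound to $d+(d-1)+d=3d-1$ rather than $2d-1$. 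A related subtlety is bookkeeping: re-inflation must preserve length-compatibility on the monomials that actually occur (so that $\LM$ of a re-inflated polynomial is the re-inflation of its $\LM$), the uniqueness of the top index in the resulting strong Gr\"obner representation, and the minimality of $\sum_i|\LC(h_i)\LC(g_i)|$ driving \Cref{BuchbergerCriterionOverNonCommutativeRings}. I expect the cleanest route to be to run the entire splicing argument in the Letterplace model, where lm-reductions become divisibility of commutative monomials on the placed alphabet, the protected core becomes an explicit range of place-indices, and re-inflation becomes a shift of place-indices; even then a complete proof appears to require a careful case analysis of how a maximal chain of lm-reductions can slide the leading monomial along $w$, and it is the absence of such an analysis that forces us to state \Cref{finiteGB} as a conjecture, supported by \Cref{LemmaBoundRings} and by extensive computation.
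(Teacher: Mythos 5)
The paper does not prove this statement; it explicitly demotes the claim (Lemma 2.9 of \cite{LMZ20}) to a conjecture, saying that problems were found with the original proof. So there is no proof in the paper to compare against, and your proposal is best read as a careful reconstruction of the intended argument together with an identification of exactly where it breaks down --- and in that role it is accurate and consistent with the narrative the paper itself gives. You correctly dispose of first-type pairs: any $t\in\textsc{lcm}(\LM(f),\LM(g))$ with $|\LM(f)|,|\LM(g)|\leq d$ satisfies $|t|\leq 2d-1<3d-1$, so under a length-compatible ordering those S- and G-polynomials are already examined and, by hypothesis, reduce to zero. You correctly isolate second-type S- and G-polynomials with long connecting word $w$ as the crux, correctly read the bound $3d-1$ as $d+(d-1)+d$, and correctly name the missing ingredient: a ``splicing'' argument showing that an lm-reduction chain taking $\textup{spoly}_2^{w'}(f,g)$ (or $\textup{gpoly}_2^{w'}$), where $w'$ is obtained from $w$ by deleting an interior block, to zero can be re-inflated into a chain taking $\textup{spoly}_2^{w}(f,g)$ to zero, with the reduction process never migrating the leading monomial into the protected central block.

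One point worth making explicit: for the re-inflation to go through, you need not just that the shorter S-polynomial reduces to zero, but that the reductors used along its chain apply at the corresponding positions for $w$ --- i.e.\ that each step's matching subword, as well as the leading monomial after each step, stays entirely within the outer $\leq d$-letter regions on either side of the transported core. That positional coherence is precisely what the splicing lemma must deliver, and it is exactly what has not been established; an lm-reduction in a free algebra can in principle shift which occurrence of a pattern is leading, and controlling that drift step-by-step over a potentially long chain is the unresolved combinatorial problem. You have characterized the gap correctly; your proposal does not close it, which is appropriate since the paper itself leaves this as \Cref{finiteGB}. Note also that \Cref{LemmaBoundRings} is complementary but does not help here: it shows the bound cannot be lowered below $3d-1$, whereas what is missing is the proof that $3d-1$ suffices.
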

\noindent
Especially G-polynomials of 2nd kind are potential sources for series of infinitely many elements in Gr\"obner bases with coefficients over rings.

\begin{lemma}~\\
\label{LemmaBoundRings}
In the situation of \Cref{finiteGB}, the seeked bound it at least $3d-1$.
\end{lemma}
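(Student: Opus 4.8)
Since the statement is a lower bound on the length one must reach, the plan is to produce an explicit (family of) generating set(s) $\mathcal{G}=\mathcal{G}_d\subset\IZ\langle X\rangle$, all of whose monomials have length at most $d$, such that running \textsc{BuchbergerAlgorithm} up to length $3d-2$ leaves $\mathcal{G}_d$ untouched while $\mathcal{G}_d$ is not a strong Gr\"obner basis of $\erz{\mathcal{G}_d}$; as the sentence preceding the lemma indicates, the obstruction is to be built from a second-type G-polynomial. The model example is $\mathcal{G}_d=\{\,2\,x_1x_2\cdots x_d,\ 3\,x_{d+1}x_{d+2}\cdots x_{2d}\,\}$ in $\IZ\langle x_1,\dots,x_{2d}\rangle$ with a length-compatible well-ordering; write $u=\LM(2x_1\cdots x_d)$ and $v=\LM(3x_{d+1}\cdots x_{2d})$.

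The argument would then run in three steps. First, for every word $w$ the B\'ezout identity $1=(-1)\cdot 2+1\cdot 3$ gives $\textup{gpoly}_2^w(2u,3v)=-2\,uwv+3\,uwv=uwv$, so $uwv\in\erz{\mathcal{G}_d}$ is a monomial with $\LC(uwv)=1$ and $|uwv|=2d+|w|$. Second, and this is the heart of the matter, I would show that $uwv$ is not lm-reducible modulo any set of elements of $\erz{\mathcal{G}_d}$ that are strictly shorter than $uwv$: an lm-reduction over $\IZ$ needs a leading coefficient dividing $\LC(uwv)=1$; the only subwords of $uwv$ of length $\le d$ that arise as leading monomials of sufficiently short ideal elements are $u$ and $v$, whose occurrences carry coefficients divisible by $2$, respectively $3$ (the letters of $u$ and $v$ being disjoint, there is no cancellation that could change this), and a short combinatorial inspection of the word $uwv$ rules out any other reducer. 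Consequently the monomials $\{uwv\}_w$ are pairwise incomparable under divisibility, $\erz{\mathcal{G}_d}$ has an infinite strong Gr\"obner basis, and every strong Gr\"obner basis must contain each $uwv$. Third, I would localise this at the threshold: the run of \textsc{BuchbergerAlgorithm} with length bound $N$ forms the first-type pairs — none are productive, since $u$ and $v$ share no letters and have no proper self-overlap — and the second-type pairs of $(2u,3v)$ with parameter $2d+|w|<N$ only. For $d\le 2$ and $N=3d-2$ no such pair is formed at all, the run is inert, and $\mathcal{G}_d$ is returned although it omits $uv$ (of length $2d=3d-2$); for larger $d$ one first absorbs into $\mathcal{G}_d$ the finitely many $uwv$ with $|w|\le d-3$ — none of which divides $u\,x_1^{d-2}\,v$ — obtaining again a configuration on which the $N=3d-2$ run is inert but which is not a strong Gr\"obner basis, the first forced element it omits being $u\,x_1^{d-2}\,v$ of length $3d-2$. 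In either case that element is generated only once $N\ge 3d-1$, so the bound in \Cref{finiteGB} cannot be taken below $3d-1$.

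The main obstacle is the second step: proving over $\IZ$ that the leading term of $uwv$ genuinely cannot be touched, i.e.\ that no combination of one-sided multiples of the short basis elements, subject to the coefficient arithmetic permitted by lm-reduction, reaches $\LC(uwv)\LM(uwv)$ — precisely the divisibility-of-leading-coefficients phenomenon stressed in the introduction and in \Cref{BspProblemeUberZnonComm}. A secondary but delicate point, needed to pin the constant to exactly $3d-1$ rather than merely ``some length exceeding $d$'', is the bookkeeping around the strict inequalities in the footnotes of \textsc{BuchbergerAlgorithm}, so that the first omission indeed falls at length $3d-1$; in a genuinely sharp instance one may additionally need a depth-two overlap — a length-$d$ generator combined with a length-$(2d-1)$ polynomial produced by a length-one leading-monomial overlap — to push the shortest forced element all the way up to length $3d-1$.
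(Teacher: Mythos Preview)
Your example $\mathcal{G}_d=\{2u,3v\}$ with $u=x_1\cdots x_d$, $v=x_{d+1}\cdots x_{2d}$ does not reach the threshold $3d-1$: the very first non-reducing G-polynomial is $uv=\textup{gpoly}_2^{1}(2u,3v)$, which has length $2d$, not $3d-1$. For $d=2$ this is $4=3d-2$, so the run of \textsc{BuchbergerAlgorithm} up to length $3d-2$ is \emph{not} inert --- it adds $uv$ --- and all you have shown is that the bound must exceed $2d-1$. Your escape via the strict inequality in the algorithm's footnotes does not help: the paper's own proof and the accompanying \textsc{Singular} computation read ``up to length $N$'' as $\le N$, and under that reading your $uv$ is formed and survives. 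Your repair for $d\ge 3$, namely absorbing the monomials $uwv$ with $|w|\le d-3$ into $\mathcal{G}_d$, is also broken: those monomials have length up to $3d-3$, so the maximal generator length --- the ``$d$'' in the conjecture --- jumps to $3d-3$, and you are no longer testing the bound $3\cdot d-1$ but something near $9d-10$.

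What the paper does instead is exactly the mechanism you allude to in your final hedge, but executed concretely: it takes $d=2$ and adjoins two \emph{unit-coefficient} generators $x_2x_3$ and $x_4x_1$ to the pair $4x_1x_2,\,6x_3x_4$. These absorb every length-$4$ G-polynomial (for instance $2x_1x_2x_3x_4$ contains the factor $x_2x_3$ and reduces to zero), so the algorithm is genuinely inert at length $3d-2=4$; only at length $5=3d-1$ do the second-type G-polynomials $2\,x_1x_2\,w\,x_3x_4$ with $w\in\{x_1,x_4\}$ survive, since for those choices of $w$ neither $x_2x_3$ nor $x_4x_1$ is a subword and neither $4$ nor $6$ divides $2$. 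The device you are missing is thus not more words $uwv$, but auxiliary coefficient-one generators that kill the obvious length-$2d$ obstructions while leaving a length-$(2d{+}1)$ obstruction alive.
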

\begin{proof}
Consider $\langle 4 x_1 x_2 , x_2 x_3 , 6 x_3 x_4 , x_4 x_1 \rangle$ in $\IZ\langle x_1,x_2,x_3,x_4 \rangle$. 
Since this is a monomial ideal, a monomial 
well-ordering can be chosen freely. We have $d=2$.
In addition to the four original  generators, no new elements appear
during the computation of a Gr\"obner basis up to length $4 = (3\cdot d - 1)-1$. However, from length $5 = 3\cdot d - 1$ there come new generators: $\{ 2 x_1 x_2 x_4 x_3 x_4 , 2 x_1 x_2 x_1 x_3 x_4, 2 x_3 x_4 x_3 x_1 x_2, 2 x_3 x_4 x_2 x_1 x_2 \}$ of length 5, then there are further 14 elements of length 6 and so on.

\begin{verbatim}
LIB "freegb.lib"; 
ring r = ZZ,(a,b,c,d),dp; 
ring R = freeAlgebra(r,6); 
short=0;
option(redSB);
ideal I = 4*a*b, 6*c*d, b*c, d*a;
twostd(I); 
\end{verbatim}

Notably, the same behavior can be already observed with the ideal 
$\langle 2 x_1 x_2 , x_2 x_3 , 3 x_3 x_4 , x_4 x_1 \rangle$.
\end{proof}
\noindent

\section{Coefficient Rings with Zero-divisors}\label{SectionZm}
\noindent
When the ring of coefficients is not a domain like $\IZ$, but an Euclidean {\bf ring} with zero divisors such as $\IZ/m\IZ$ for some nonzero $m\in \IZ$, which is neither a unit nor a prime, then we can make use of factorizations of $m$.

For coefficients of polynomials in $\IZ/m\IZ\langle X \rangle$ there are only two possibilities: They are either units or zero divisors. In the first case, we can treat $\IZ/m\IZ$ like a field. In the second case, one can use a factorization of $m$ into coprime divisors and perform a Gr\"obner basis computation for each divisor, a lifting method. This was done for the commutative case in \cite{EderHofm19} and can be extended as we explain in this section. We will only consider factorizations into coprime numbers and not focus on the case, where $m$ is a prime power. Work on this in the commutative setting was done in \cite[ch. 3 \& 4]{WienandThesis} with an application to modelling fixed bit-width arithmetic and analogies to the commutative case are yet to be investigated.

Recall, that a factorization of $m$, say $m=ab$ for some coprime numbers $a,b\in\IZ$, implies, that $x y\neq m$ for $a\nmid x \mid a$, $ b \nmid y \mid b$. Suppose, that $c x = a, d y=b$ and $x y=m$. Then $m=ab=c x d y=c d m$ and so $m (1-c d)=0$, which implies $1=c d$, because $\IZ$ is a domain. But then $c$ is a unit, which contradicts $a \nmid x$.

This was easy to see, but it also means, that we have to choose our coefficients wisely, when using lifting methods. For $a, b\in \IZ$ coprime, we consider the canonical projections
\begin{align*}
\begin{array}{lrcl} 
& \pi:& \IZ\langle X\rangle & \to\IZ/m\IZ \langle X\rangle, \\
& \pi_a:& \IZ/m\IZ\langle X\rangle \cong (a\IZ+b \IZ)/m\IZ\langle X\rangle & \to \IZ/a\IZ\langle X\rangle\\
\hbox{and} & \pi_b:& \IZ/m\IZ\langle X\rangle \cong (a\IZ+b \IZ)/m\IZ\langle X\rangle & \to \IZ/b\IZ\langle X\rangle. \\
\end{array}
\end{align*}
For an ideal $\mathcal{J}$ of $\IZ/m\IZ\langle X \rangle=: \mathcal{P}_m$, we assume that there exist countable sets $\mathcal{G}_a=\{g_{a, i}\}_i, \mathcal{G}_b=\{g_{b, j}\}_j \subseteq \mathcal{P}_m$, such that $\pi_a(\mathcal{G}_a)\setminus\{0\}$ is a strong Gr\"obner basis for $\pi_a(\mathcal{J})$ and $\pi_b(\mathcal{G}_b)\setminus\{0\}$ is a strong Gr\"obner basis for $\pi_b(\mathcal{J})$. We may demand without loss of generality that $\pi(a) \in \mathcal{G}_a$, $\pi(b)\in \mathcal{G}_b$, since they both map to zero under $
\pi_a$, $\pi_b$ respectively. Furthermore, we assume that $\pi(a)\nmid \lc(g_{a, i}) \mid \pi(a)$ for $g_{a, i}\neq \pi(a)$ and $\pi(b)\nmid \lc(g_{b, j}) \mid \pi(b)$ for $g_{b, j}\neq \pi(b)$. This implies, that each leading coefficient is a nontrivial zero divisor in the respective quotient ring. For every pair $(i, j)$ of indices there exist monomials $\tau_{i, j}, \tau_{j, i}\in \mathcal{B}^e$, such that $\tau_{i, j} \lm(g_{a, i}) = \tau_{j, i} \lm(g_{b, j})$ and one of the four cases
\begin{align*}
\tau_{i, j}= 1 \otimes x' , \tau_{j, i}= y \otimes 1 \hspace{.5cm}& \tau_{i, j}= x \otimes 1 , \tau_{j, i}= 1 \otimes y'\\
\tau_{i, j}= 1 \otimes 1 , \tau_{j, i}= y \otimes y' \hspace{.5cm}& \tau_{i, j}= x \otimes x' , \tau_{j, i}= 1 \otimes 1
\end{align*}
occurs for suitable monomials $x, x', y, y'\in \mathcal{B}$. These are precisely the overlap relations corresponding to first and second type S- and G-polynomials. We define
\begin{align*}
f_{i, j}:=\pi(ar) \lc(g_{a, i}) \tau_{j, i} g_{b, j}+\pi(bs) \lc(g_{b, j}) \tau_{i, j} g_{a, i}
\end{align*}
for a pair $(i,j)$ with $ar+bs=1$.

\begin{theorem}\label{platzhalter}~\\
Let $m=ab\in \IZ$ with $a, b$ coprime such that $ar+bs=1$ for some $r, s \in \IZ$. Furthermore, let $\mathcal{J}$ be an ideal of $\mathcal{P}_m$
accompanied by the sets $\mathcal{G}_a$ and $\mathcal{G}_b$ defined as above.\\
Then $\mathcal{G}:=\{ f_{i, j} \mid \tau_{i, j} \lm(g_{a, i})=\tau_{j, i} \lm(g_{b, j}) \}$ is a strong Gr\"obner basis for $\mathcal{J}$.
\begin{proof} By the second isomorphism theorem we have
\begin{align*}
\begin{array}{ll}
     & (\IZ/m\IZ)/\langle \pi(a)\rangle \cong \IZ/a\IZ \\
\mbox{and}     & (\IZ/m\IZ)/\langle \pi(b)\rangle \cong \IZ/b\IZ.
\end{array}
\end{align*}
From this and the forthcoming \Cref{EderHofmannFactorization1} (after lifting $\mathcal{J}$ to $\mathcal{P}$) it follows that $\mathcal{G}_a \cup \{\pi(a)\} = \mathcal{G}_a$, $\mathcal{G}_b \cup \{\pi(b)\}=\mathcal{G}_b$ are strong Gr\"obner basis of $\mathcal{J}+  \pi(a)\,\mathcal{P}_m$, $\mathcal{J} + \pi(b)\,\mathcal{P}_m$ respectively. Then, after applying the isomorphism theorem one more time, all conditions of the following \Cref{EderHofmannFactorization2} are satisfied and it follows that $\mathcal{G}$ is a strong Gr\"obner basis for $\mathcal{J}$.
\end{proof}
\end{theorem}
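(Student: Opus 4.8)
The plan is to split the coefficient ring $\IZ/m\IZ$ by the Chinese Remainder Theorem and then verify the definition of a strong Gr\"obner basis directly, componentwise in the resulting product. First I would record the setup: since $\gcd(a,b)=1$ and $ar+bs=1$, the classes $e_a:=\pi(bs)$ and $e_b:=\pi(ar)$ are complementary orthogonal idempotents of $\IZ/m\IZ$, so the map
\[
\Phi\colon\ \mathcal{P}_m\ \longrightarrow\ \mathcal{P}_a\times\mathcal{P}_b,\qquad h\ \longmapsto\ (\pi_a(h),\pi_b(h))
\]
(with $\mathcal{P}_a:=\IZ/a\IZ\langle X\rangle$, $\mathcal{P}_b:=\IZ/b\IZ\langle X\rangle$) is a ring isomorphism that is the identity on the monomial set $\mathcal{B}$ — it only rescales coefficients — hence compatible with a fixed monomial well-ordering $\preceq$ and with the componentwise formation of leading monomials and leading terms. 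Since $(1,0)$ and $(0,1)$ are central idempotents of the product, every ideal there is a product of ideals, so $\Phi(\mathcal{J})=\pi_a(\mathcal{J})\times\pi_b(\mathcal{J})$; and from $\pi_a(e_b)=0$, $\pi_a(e_a)=1$ and their symmetric counterparts one obtains the clean identity
\[
\Phi(f_{i,j})=\bigl(\,\pi_a(\lc(g_{b,j}))\,\tau_{i,j}\,\pi_a(g_{a,i})\,,\ \pi_b(\lc(g_{a,i}))\,\tau_{j,i}\,\pi_b(g_{b,j})\,\bigr).
\]
Assuming, as one should, that $\mathcal{G}_a,\mathcal{G}_b$ were chosen inside the respective ideals, this also gives $\mathcal{G}\subseteq\mathcal{J}$.

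Next I would pin down the leading data of the $f_{i,j}$, which is where both halves of the hypothesis on leading coefficients get consumed. Since $\ker(\pi_a\colon\IZ/m\IZ\to\IZ/a\IZ)=\langle\pi(a)\rangle$, the assumption $\pi(a)\nmid\lc(g_{a,i})$ means $\pi_a(\lc(g_{a,i}))\neq0$ for $g_{a,i}\neq\pi(a)$, so $\pi_a(g_{a,i})$ keeps leading monomial $\lm(g_{a,i})$; and applying $\pi_a$ to $\lc(g_{b,j})\mid\pi(b)$ and using that $\pi_a(\pi(b))=b\bmod a$ is a unit of $\IZ/a\IZ$ shows $\pi_a(\lc(g_{b,j}))$ is a unit of $\IZ/a\IZ$ (for all $j$, including $g_{b,j}=\pi(b)$), and symmetrically $\pi_b(\lc(g_{a,i}))$ is a unit of $\IZ/b\IZ$. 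Substituting into the formula: when $g_{a,i}\neq\pi(a)$ and $g_{b,j}\neq\pi(b)$, both components of $\Phi(f_{i,j})$ are nonzero and, by the defining condition $\tau_{i,j}\lm(g_{a,i})=\tau_{j,i}\lm(g_{b,j})=:t$, they share the leading monomial $t$; hence $\LM(f_{i,j})=t$ and $\LC(f_{i,j})=\lc(g_{a,i})\lc(g_{b,j})\in\IZ/m\IZ$ (the two components reassemble to this product under $\Phi^{-1}$), so $\LT(f_{i,j})=\lc(g_{a,i})\lc(g_{b,j})\cdot t$. When $g_{b,j}=\pi(b)$ (so $\lm(g_{b,j})=1$) the $b$-component of $\Phi(f_{i,j})$ vanishes, and taking the overlap relation with $t=\lm(g_{a,i})$, $\tau_{i,j}=1\otimes1$ leaves $\Phi(f_{i,j})=(\text{unit}\cdot\pi_a(g_{a,i}),0)$; symmetrically for $g_{a,i}=\pi(a)$; and $f_{\pi(a),\pi(b)}$ is $0$ and discarded.

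Then I would verify the definition directly: it suffices to show that every nonzero $h=(h_1,h_2)\in\pi_a(\mathcal{J})\times\pi_b(\mathcal{J})$ has some $f_{i,j}$ with $\LT(\Phi(f_{i,j}))\mid\LT(h)$ — this alone forces $\mathrm{L}(\Phi(\mathcal{J}))\subseteq\mathrm{L}(\Phi(\mathcal{G}))$, and both properties transport back along $\Phi^{-1}$ to $\mathcal{J}$ and $\mathcal{G}$. I would distinguish cases on $\LM(h_1)$ versus $\LM(h_2)$, regarding $\LM(0)$ as below everything. If $\LM(h_1)\succ\LM(h_2)$ then $\LT(h)=(\LC(h_1),0)\,\LM(h_1)$; since $\pi_a(\mathcal{G}_a)\setminus\{0\}$ is a strong Gr\"obner basis of $\pi_a(\mathcal{J})\ni h_1$, pick $g_{a,i}$ with $\LT(\pi_a(g_{a,i}))\mid\LT(h_1)$ (so $g_{a,i}\neq\pi(a)$) and pair it with $g_{b,j}=\pi(b)$: then $\LM(f_{i,j})=\lm(g_{a,i})\mid\LM(h_1)$, the $a$-part of $\LC(f_{i,j})$ is a unit times $\pi_a(\lc(g_{a,i}))$ and so divides $\LC(h_1)$, and the $b$-part is $0$. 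The case $\LM(h_1)\prec\LM(h_2)$ is symmetric. If $\LM(h_1)=\LM(h_2)=u$, pick $g_{a,i}$ with $\LT(\pi_a(g_{a,i}))\mid\LT(h_1)$ and $g_{b,j}$ with $\LT(\pi_b(g_{b,j}))\mid\LT(h_2)$; as $\lm(g_{a,i})$ and $\lm(g_{b,j})$ both divide $u$ we have $\textsc{cm}(\lm(g_{a,i}),\lm(g_{b,j}))\neq\emptyset$, so by the discussion preceding \Cref{BuchbergerCriterionOverNonCommutativeRings} one of the four first-/second-type overlap relations is realised by some $t\mid u$, and the associated $f_{i,j}\in\mathcal{G}$ has $\LM(f_{i,j})=t\mid u$ and $\LC(f_{i,j})=\lc(g_{a,i})\lc(g_{b,j})$, whose $a$-part is $\pi_a(\lc(g_{a,i}))$ times a unit (hence divides $\LC(h_1)$) and symmetrically for the $b$-part. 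In every case $\LT(\Phi(f_{i,j}))\mid\LT(h)$, completing the proof.

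The step I expect to be the main obstacle is the coefficient bookkeeping across the two coprime factors: one has to be genuinely careful that $\pi_a(\lc(g_{b,j}))$ and $\pi_b(\lc(g_{a,i}))$ are units — this is exactly where $\lc(g_{a,i})\mid\pi(a)$ and $\lc(g_{b,j})\mid\pi(b)$ are used — and that the two components of $\Phi(f_{i,j})$ really do agree on the leading monomial $t$, so that $\LT(f_{i,j})$ collapses to $\lc(g_{a,i})\lc(g_{b,j})\cdot t$; interwoven with this is the treatment of the distinguished constants $\pi(a)\in\mathcal{G}_a$, $\pi(b)\in\mathcal{G}_b$, whose role is precisely to supply the $f_{i,j}$ covering the ``one component vanishes'' cases (and one must check that $\pi_a(g_{a,i})=0$, i.e.\ $g_{a,i}=\pi(a)$, never enters $\mathcal{G}$ with spurious leading data). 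A more modular route — the one the paper follows — is to factor the argument through two auxiliary lemmas: first, that $\mathcal{G}_a$ (already containing $\pi(a)$) is a strong Gr\"obner basis of $\mathcal{J}+\pi(a)\mathcal{P}_m$, via $\mathcal{P}_m/\pi(a)\mathcal{P}_m\cong\mathcal{P}_a$ and a lifting-through-a-quotient argument, symmetrically for $b$; and second, a ``gluing'' lemma producing a strong Gr\"obner basis of $\mathcal{J}$ out of strong Gr\"obner bases of $\mathcal{J}+\pi(a)\mathcal{P}_m$ and $\mathcal{J}+\pi(b)\mathcal{P}_m$ under $\pi(a)\mathcal{P}_m+\pi(b)\mathcal{P}_m=\mathcal{P}_m$, the $f_{i,j}$ being exactly the glued generators. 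Either way the content is the Chinese-Remainder splitting plus the matching of leading monomials coming from the two factors.
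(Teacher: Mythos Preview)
Your argument is correct. You take a genuinely more direct route than the paper: instead of delegating to the two auxiliary results (\Cref{EderHofmannFactorization1} and \Cref{EderHofmannFactorization2}), you work inside the Chinese-Remainder isomorphism $\Phi\colon\mathcal{P}_m\to\mathcal{P}_a\times\mathcal{P}_b$, compute $\Phi(f_{i,j})$ explicitly, and verify the strong Gr\"obner property componentwise by a short case split on $\LM(h_1)$ versus $\LM(h_2)$. Your handling of the ``one component vanishes'' cases via the distinguished constants $\pi(a)\in\mathcal{G}_a$, $\pi(b)\in\mathcal{G}_b$ is exactly the content of \Cref{EderHofmannFactorization1} unpacked inline, and your ``equal leading monomial'' case reproduces, through $\Phi$, the divisibility argument of \Cref{EderHofmannFactorization2}. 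So the two proofs are morally the same, but packaged differently: the paper's modular route isolates two lemmas of independent interest (non-commutative analogues of the commutative results of Eder--Hofmann), while your direct route is more self-contained and makes the role of coprimality transparent via the orthogonal idempotents $e_a=\pi(bs)$, $e_b=\pi(ar)$. One small point: your line ``assuming, as one should, that $\mathcal{G}_a,\mathcal{G}_b$ were chosen inside the respective ideals'' gives $\mathcal{G}\subseteq\mathcal{J}$ only after passing through $\Phi$ (namely $\Phi(f_{i,j})\in\pi_a(\mathcal{J})\times\pi_b(\mathcal{J})$), not from $\mathcal{G}_a,\mathcal{G}_b\subseteq\mathcal{J}$ directly; the paper obtains the same inclusion from $\mathcal{J}=\pi(ar)\langle\mathcal{G}_b\rangle+\pi(bs)\langle\mathcal{G}_a\rangle$, which is the unwound form of your CRT identity.
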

\noindent
Note, that the $\tau_{i, j}, \tau_{j, i}$ are not uniquely determined since all overlap relations of the leading monomials have to be considered. The above lemma improves our algorithm for computing strong Gr\"obner bases over principal ideal rings. It remains to show, that the Theorems $10$ and $12$ from \cite{EderHofm19}, formulated in the commutative case also hold in the commutative one.

\begin{theorem}\label{EderHofmannFactorization1} (Generalization of \cite[Theorem 10]{EderHofm19})~\\
Let $m \in \IZ\setminus \{0\}$ and $\mathcal{I}$ an ideal of $\mathcal{P}$. Let $\mathcal{G} \subseteq \mathcal{P}$, such that $\pi (\mathcal{G})$ is a strong Gr\"obner basis of $\pi (\mathcal{I})$. Additionally, we assume that $m \nmid\lc(g) \mid m$ for every $g \in \mathcal{G}$. Then $\mathcal{G} \cup \{m\}$ is a strong Gr\"obner basis for $\mathcal{I} + m\mathcal{P}$.
\begin{proof} Clearly $\mathcal{G} \cup \{m\}$ is a subset of $\mathcal{I} + m\mathcal{P}$. Let $f\in \mathcal{I}$. If $\pi(f)=0$, then $m \mid \lt(f)$. Hence, we may assume without loss of generality that $\pi(f) \neq 0$ and $m \nmid \lc(f)$. Then $\lm(\pi(f))=\lm(f)$ and there exists $g\in \mathcal{G}$ such that $\lt(\pi(g)) \mid\lt(\pi(f))$, because $\pi (\mathcal{G})$ is a strong Gr\"obner basis and we can find a term $h \in \mathcal{P}^e$ with $\pi(h)\lt(\pi(g))=\lt(\pi(f))$. Thus $\lm(h) \lm(g)=\lm(f)$ and $\pi(h\lt(g)-\lt(f))=0$. Thus, we have $h\lt(g)-\lt(f)=c \lm(f)$ for some $c\in m \IZ$ and hence $\lt(g) \mid \lt(f)$, because $\lc(g) \mid m$ by our additional assumption and $\lm(g) \mid \lm(f)$. In other words $\mathcal{G} \cup \{m\}$ is a strong Gr\"obner basis for $\mathcal{I} + m\mathcal{P}$.
\end{proof}
\end{theorem}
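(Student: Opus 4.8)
The plan is to verify the defining property of a strong Gr\"obner basis for the two-sided ideal $\mathcal{J}:=\mathcal{I}+m\mathcal{P}$ directly. Containment $\mathcal{G}\cup\{m\}\subseteq\mathcal{J}$ (and, since $\pi(\mathcal{G})$ generates $\pi(\mathcal{I})$, generation of $\mathcal{J}$) is routine, and it is immediate from the definitions that once one knows that for every nonzero $f\in\mathcal{J}$ some element of $\mathcal{G}\cup\{m\}$ has leading term dividing $\LT(f)$ in $\mathcal{P}$, the inclusion $\LIdeal(\mathcal{J})\subseteq\LIdeal(\mathcal{G}\cup\{m\})$ follows as well (factor $\LM(f)=t_1\LM(g')t_2$ and $\LC(f)=c\LC(g')$). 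So the real content is to exhibit, for each such $f$, a leading-term divisor among $\LT(\mathcal{G})\cup\{\LT(m)\}$.

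First I would dispose of the divisible case: if $m\mid\LC(f)$ --- in particular whenever $\pi(f)=0$ --- then $\LT(m)=m\cdot 1$ divides $\LT(f)$, since the empty word divides every word and $m\mid\LC(f)$; here $g'=m$ works. So I may assume $m\nmid\LC(f)$, in which case reducing coefficients modulo $m$ leaves the leading term intact: $\pi(f)\neq 0$, $\LM(\pi(f))=\LM(f)$, $\LC(\pi(f))=\pi(\LC(f))$. Since $\pi(m\mathcal{P})=0$ we get $\pi(f)\in\pi(\mathcal{I})$, so the hypothesis that $\pi(\mathcal{G})$ is a strong Gr\"obner basis for $\pi(\mathcal{I})$ gives a $g\in\mathcal{G}$ with $\LT(\pi(g))\mid\LT(\pi(f))$ in $\IZ/m\IZ\langle X\rangle$. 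The assumption $m\nmid\LC(g)$ (valid for all $g\in\mathcal{G}$, so $\pi(g)\neq 0$) forces $\LM(\pi(g))=\LM(g)$, so the monomial half of this divisibility is literally the statement $\LM(g)\mid\LM(f)$ about words in $\mathcal{B}$: the witnessing factorization lifts verbatim to $\mathcal{P}$, and --- unlike in \Cref{BuchbergerCriterionOverNonCommutativeRings} --- non-commutativity causes no trouble, since monomial divisibility is a purely combinatorial relation between words.

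The one genuinely delicate step, which I expect to be the main obstacle, is to lift the coefficient divisibility $\pi(\LC(g))\mid\pi(\LC(f))$ from $\IZ/m\IZ$ to $\LC(g)\mid\LC(f)$ over $\IZ$; divisibility modulo $m$ does not lift in general, and this is exactly where the other half of the hypothesis, $\LC(g)\mid m$, enters. The computation is short: choose $c\in\IZ$ with $c\LC(g)\equiv\LC(f)\pmod m$, write $m=\LC(g)m'$, and then $\LC(f)=c\LC(g)-mk=\LC(g)(c-m'k)$ for the appropriate $k\in\IZ$, so $\LC(g)\mid\LC(f)$ in $\IZ$. Combined with $\LM(g)\mid\LM(f)$ this yields $\LT(g)\mid\LT(f)$, and hence $\mathcal{G}\cup\{m\}$ is a strong Gr\"obner basis for $\mathcal{I}+m\mathcal{P}$. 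The role of the paired assumption $m\nmid\LC(g)\mid m$ is thus twofold: it keeps leading monomials from collapsing under $\pi$ and it makes the coefficient lift go through; dropping either part breaks the argument.
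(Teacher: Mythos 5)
Your proof is correct and follows essentially the same route as the paper's: reduce to the case $m\nmid\LC(f)$, pull back a strong divisor $g$ from $\pi(\mathcal{G})$, and then lift the divisibility on leading terms using $\lc(g)\mid m$ to handle the coefficients. The only difference is that you spell out the coefficient-lifting computation $\LC(f)=\LC(g)(c-m'k)$ explicitly, which the paper leaves terse.
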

\noindent
We intend to use the previous \Cref{EderHofmannFactorization1} in the proof of \Cref{platzhalter} by applying the result to a lift of $\mathcal{J}$.

\begin{theorem}\label{EderHofmannFactorization2} (Generalization of \cite[Theorem 12]{EderHofm19})~\\
Let $\mathcal{J}$ be an ideal of $\mathcal{P}_m$ and $a, b, r, s \in \IZ_m$, such that $ab=0$ and $a, b$ coprime with $ar+bs=1$. Let $\mathcal{G}_a$, $\mathcal{G}_b$ be strong Gr\"obner bases for $\mathcal{J}+ a \mathcal{P}_m$ and $\mathcal{J}+ b \mathcal{P}_m$ respectively, such that for every $g_{a, i} \in \mathcal{G}_a\setminus \IZ_m$ we have $a \nmid\lc(g_{a, i}) \mid a$. Assume, that the same holds for $\mathcal{G}_b$. For $g_{a, i} \in \mathcal{G}_a$ and $g_{b, j} \in \mathcal{G}_b$ we define
\begin{align*}
f_{i, j}:=\pi(ar) \lc(g_{a, i}) \tau_{j, i} g_{b, j}+\pi(bs) \lc(g_{b, j}) \tau_{i, j} g_{a, i}
\end{align*}
and assume $\lc(g_{a, i})\lc(g_{b, j})\neq 0$ for all $i, j$. Then $\mathcal{G}:=\{f_{i, j}\}_{i, j}$ is a strong Gr\"obner basis for $\mathcal{J}$.
\begin{proof} By our assumptions we have $\mathcal{J}=ar \mathcal{J}+ bs \mathcal{J}=ar (\mathcal{J}+b\mathcal{P}_m)+bs( \mathcal{J}+a\mathcal{P}_m)=ar \langle \mathcal{G}_b\rangle +bs \langle \mathcal{G}_a\rangle$. Since $a$ and $b$ are coprime and $\lc(g_{a, i})\mid a$, $\lc(g_{b, j})\mid b$, we see that $\lc(g_{a, i})$ and $\lc(g_{b, j})$ are coprime as well. Furthermore, we have $\lc(g_{a, i})\lc(g_{b, j}) \IZ_m=\lc(g_{a, i})\IZ_m\,\cap\lc(g_{b, j})\IZ_m\supsetneq a\IZ_m \cap b \IZ_m=\{0\}$ and thus $\lt(f_{i, j})=\lc(g_{a, i})\lc(g_{b, j}) \tau_{j,i}\lm(g_{b, j})$. Here we use that the product of leading coefficients is nonzero. Now let $f\in \mathcal{J}\subseteq (\mathcal{J}+ a \mathcal{P}_m) \cap ( \mathcal{J}+ b \mathcal{P}_m)$. Then there exist $g_{a, i}\in \mathcal{G}_a$ and $g_{b, j} \in \mathcal{G}_b$, such that $\lt(g_{a, i}) \mid\lt(f)$ and $\lt(g_{b, j}) \mid\lt(f)$. Especially $\tau_{i,j}\lm(g_{a, i}) \mid \lm(f)$ and $\mbox{lcm}(\lc(g_{a, i}),\lc(g_{b, j})) \mid \lc(f)$. Finally, $\lt(f_{i, j}) \mid \lt(f)$ and $\mathcal{G}$ is a strong Gr\"obner basis for $\mathcal{J}$.
\end{proof}
\end{theorem}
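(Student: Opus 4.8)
The plan is to follow the Chinese-remainder pattern of \cite[Theorem 12]{EderHofm19}, separating an ideal-theoretic part from a leading-term part. First I would record the gluing identity: since $ab=0$ and $ar+bs=1$ in $\IZ/m\IZ$, one has $ar\,(\mathcal{J}+b\mathcal{P}_m)=ar\mathcal{J}$ (because $arb\,\mathcal{P}_m=0$) and symmetrically $bs\,(\mathcal{J}+a\mathcal{P}_m)=bs\mathcal{J}$, whence $\mathcal{J}=(ar+bs)\mathcal{J}=ar\mathcal{J}+bs\mathcal{J}=ar\langle\mathcal{G}_b\rangle+bs\langle\mathcal{G}_a\rangle$. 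In particular each $f_{i,j}$ lies in $\mathcal{J}$: in its first summand $\pi(ar)\,\lc(g_{a,i})\,\tau_{j,i}\,g_{b,j}$ one has $\pi(ar)\,g_{b,j}\in ar\langle\mathcal{G}_b\rangle=ar\mathcal{J}\subseteq\mathcal{J}$, and $\mathcal{J}$ is stable under the $\mathcal{B}^e$-action and under scalars; the second summand is symmetric. Thus $\mathcal{G}\subseteq\mathcal{J}$.

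Next I would determine $\lt(f_{i,j})$. From $\lc(g_{a,i})\mid a$, $\lc(g_{b,j})\mid b$ and coprimality of $a,b$ it follows that $\langle\lc(g_{a,i})\rangle$ and $\langle\lc(g_{b,j})\rangle$ are comaximal, so $\langle\lc(g_{a,i})\rangle\cap\langle\lc(g_{b,j})\rangle=\langle\lc(g_{a,i})\lc(g_{b,j})\rangle$; equivalently $\lcm(\lc(g_{a,i}),\lc(g_{b,j}))=\lc(g_{a,i})\lc(g_{b,j})$ up to a unit, a value nonzero by hypothesis. Writing $t:=\tau_{i,j}\lm(g_{a,i})=\tau_{j,i}\lm(g_{b,j})$ for the chosen overlap monomial, both summands of $f_{i,j}$ have all their monomials $\preceq t$, and their coefficients at $t$ sum to $(ar+bs)\,\lc(g_{a,i})\lc(g_{b,j})=\lc(g_{a,i})\lc(g_{b,j})\ne 0$; hence no cancellation occurs and $\lm(f_{i,j})=t$, $\lc(f_{i,j})=\lc(g_{a,i})\lc(g_{b,j})$.

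Then comes the strong-basis check. Let $0\ne f\in\mathcal{J}$; since $f\in(\mathcal{J}+a\mathcal{P}_m)\cap(\mathcal{J}+b\mathcal{P}_m)$ and $\mathcal{G}_a,\mathcal{G}_b$ are strong Gr\"obner bases, there are $g_{a,i},g_{b,j}$ with $\lt(g_{a,i})\mid\lt(f)$ and $\lt(g_{b,j})\mid\lt(f)$. In particular $\lm(g_{a,i})$ and $\lm(g_{b,j})$ both divide $\lm(f)$, hence overlap inside $\lm(f)$; I would take the index pair together with the overlap relation $\tau_{i,j},\tau_{j,i}$ realizing exactly that overlap, so that $t=\tau_{j,i}\lm(g_{b,j})$ divides $\lm(f)$. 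On the coefficient side, $\lc(g_{a,i})\mid\lc(f)$ and $\lc(g_{b,j})\mid\lc(f)$ with coprime divisors force $\lc(g_{a,i})\lc(g_{b,j})=\lcm(\lc(g_{a,i}),\lc(g_{b,j}))\mid\lc(f)$. Combining, $\lt(f_{i,j})\mid\lt(f)$. This is the strong-division property; together with $\mathcal{G}\subseteq\mathcal{J}$ it gives $\LIdeal(\mathcal{J})\subseteq\LIdeal(\mathcal{G})$, and strong-reducing $f$ and inducting along the monomial well-ordering yields $\langle\mathcal{G}\rangle=\mathcal{J}$, so $\mathcal{G}$ is a strong Gr\"obner basis for $\mathcal{J}$.

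The step I expect to be the real obstacle is the overlap bookkeeping above: the relations $\tau_{i,j},\tau_{j,i}$ are not unique (cf.\ \Cref{Remark_First_Second_Type_S_And_G_Polynomial}), so one must check that $\{f_{i,j}\}$ contains a member for \emph{every} way $\lm(g_{a,i})$ and $\lm(g_{b,j})$ can overlap---the convention already fixed in \Cref{platzhalter}---and one must separately dispatch the degenerate cases where one of $g_{a,i},g_{b,j}$ is constant (for instance $\pi(a)\in\mathcal{G}_a$, $\pi(b)\in\mathcal{G}_b$) or $\lm(f)=1$, where ``overlap'' and the base of the induction have to be read off directly.
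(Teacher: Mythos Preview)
Your proposal is correct and follows essentially the same Chinese-remainder route as the paper: the identity $\mathcal{J}=ar\langle\mathcal{G}_b\rangle+bs\langle\mathcal{G}_a\rangle$, the computation $\lt(f_{i,j})=\lc(g_{a,i})\lc(g_{b,j})\cdot t$ via coprimality of the leading coefficients, and the strong-division check for an arbitrary $f\in\mathcal{J}$ by pulling back through both $\mathcal{G}_a$ and $\mathcal{G}_b$. Your write-up is in fact more explicit than the paper's in two places---you spell out why each $f_{i,j}\in\mathcal{J}$, and you flag the overlap-bookkeeping issue (that for a given $f$ one must pick the $\tau_{i,j},\tau_{j,i}$ realizing the particular overlap inside $\lm(f)$, and that all such overlaps must appear among the $f_{i,j}$)---points the paper passes over silently.
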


\section{Forming and Discarding Critical Pairs}
\noindent
To improve the procedure \textsc{BuchbergerAlgorithm}, we need criteria to determine which pairs of polynomials of the input set yield S- and G-polynomials, which reduce to zero. 
In the following we will recall the criteria for discarding critical pairs known from the commutative case and analyze, which of them can be applied in the case $\mathcal{R} \langle X \rangle$. 

\begin{remark}~\\
Consider the case $t:=\LM(f)$ is divisible by (or even equal to) $\LM(g)$. Then $\textsc{lcm}(\LM(f), \LM(g))$ contains exactly one element, namely $t$, because it is the only minimal element that is divisible by both leading monomials. Therefore, $\textup{spoly}_1^t(f, g)$ and $\textup{gpoly}_1^t(f, g)$ are the only first type S- and G-polynomials. However, these are not uniquely determined, we might have more overlap relations of $\LM(f)$, $\LM(g)$, as we have seen in the previous example of \Cref{Remark_First_Second_Type_S_And_G_Polynomial}, and we still need second type S-polynomials.
\end{remark}
\noindent
The following Lemma 
explains, in particular, why G-polynomials are redundant over fields.
\begin{lemma} (Buchberger's criterion, generalization of  \cite{EderPfisterPopescu18,Lichtblau12})~\\
Let $f, g \in \mathcal{P}\setminus\{0\}$. If $\LC(f) \mid \LC(g)$ in $\mathcal{R}$, then every G-polynomial of $f$ and $g$ reduces to an S-polynomial of $f$ and $g$.
\begin{proof}
By the hypothesis we have $b=\lcm(\LC(f),\LC(g))=\LC(g)$. Let $r\in \mathcal{R}$, such that $r\LC(f)=\LC(g)$. Then $\LC(f)=(nr+1)\LC(f)-n\LC(g)$ yields any possible B\'{e}zout identity for $b$, where $n\in \IZ$. Thus, with $t=\tau_f \LM(f)=\tau_g \LM(g)$, every G-polynomial of $f$ and $g$ has shape $\textup{gpoly}(f,g)= (nr+1) \tau_f f- n \tau_g g=\LC(f) t + n(r\tau_f \tail(f)-\tau_g \tail(g))+ \tau_f \tail(f)$. Subtracting $\tau_f f$, we can reduce this to $n(r\tau_f \tail(f)-\tau_g \tail(g))$. Note that $r\tau_f \tail(f)-\tau_g \tail(g)$ is an S-polynomial of $f$ and $g$. Hence, every G-polynomial of $f$ and $g$ reduces to zero, after we compute their S-polynomials.
\end{proof}
\end{lemma}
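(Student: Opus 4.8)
\section*{Proof proposal}

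The plan is to reduce any G-polynomial of $f$ and $g$ to a scalar multiple of the corresponding S-polynomial by a \emph{single} lm-reduction step, exploiting that $\LC(f)\mid\LC(g)$ forces $\gcd(\LC(f),\LC(g))=\LC(f)$: then the ``canonical'' B\'ezout representative of the gcd is $1\cdot\LC(f)+0\cdot\LC(g)$, and every other one differs from it by a multiple of the trivial syzygy of the leading coefficients --- which is precisely what yields the S-polynomial once leading terms cancel. I would treat first- and second-type G-polynomials at once by writing an arbitrary one as $p=b_f\,\sigma_f f+b_g\,\sigma_g g$ with $\sigma_f,\sigma_g\in\mathcal{B}^e$ satisfying $\sigma_f\LM(f)=\sigma_g\LM(g)=:t$ (first type: $t\in\textsc{lcm}(\LM(f),\LM(g))$; second type: $\sigma_f=1\otimes w\LM(g)$, $\sigma_g=\LM(f)w\otimes 1$, $t=\LM(f)w\LM(g)$) and $b_f\LC(f)+b_g\LC(g)=\gcd(\LC(f),\LC(g))$. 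Writing $\LC(g)=r\LC(f)$, the B\'ezout condition becomes $b_f+rb_g=1$, so $b_f=1+nr$, $b_g=-n$ for some $n\in\mathcal{R}$ (the canonical choice being $n=0$).

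Next I would expand $p$ via $f=\LC(f)\LM(f)+\tail(f)$ and $g=\LC(g)\LM(g)+\tail(g)$: the coefficient of $t$ collapses to $b_f\LC(f)+b_g\LC(g)=\LC(f)$, and the remaining terms reorganize into the identity $p=\sigma_f f+n\bigl(r\,\sigma_f\tail(f)-\sigma_g\tail(g)\bigr)$. Since $\LM(\tail(f))\prec\LM(f)$, $\LM(\tail(g))\prec\LM(g)$ and a monomial well-ordering is compatible with the two-sided multiplication, every monomial occurring in the bracket is $\prec t$; hence $\LM(p)=t$, $\LC(p)=\LC(f)$ and $\LT(p)=\LT(\sigma_f f)$. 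Thus $\LM(f)\mid\LM(p)$, the Euclidean division $\LC(p)=1\cdot\LC(f)+0$ is exact with $\vert 0\vert<\vert\LC(f)\vert$, and $f$ lm-reduces $p$ with multiplier $\sigma_f$ and quotient $1$; one step sends $p$ to $p-\sigma_f f=n\bigl(r\,\sigma_f\tail(f)-\sigma_g\tail(g)\bigr)$.

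It then remains to identify $r\,\sigma_f\tail(f)-\sigma_g\tail(g)$ with the S-polynomial of $f$ and $g$ w.r.t.\ $t$ (of the matching first/second type): since $\lcm(\LC(f),\LC(g))=\LC(g)$ we may take $a_f=r$, $a_g=1$, so $\textup{spoly}(f,g)=r\,\sigma_f f-\sigma_g g$, whose leading terms $r\LC(f)\,t$ and $\LC(g)\,t$ cancel, leaving exactly $r\,\sigma_f\tail(f)-\sigma_g\tail(g)$. Hence every G-polynomial of $f$ and $g$ lm-reduces in one step to $n\cdot\textup{spoly}(f,g)$, and to $0$ outright for $n=0$. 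Since \textsc{BuchbergerAlgorithm} processes $\textup{spoly}(f,g)$ anyway, whenever that S-polynomial has normal form $0$ with respect to $\mathcal{G}$ the same reduction chain, scaled by $n$, drives $n\cdot\textup{spoly}(f,g)$ --- and therefore the G-polynomial --- to $0$; over a field $\LC(f)\mid\LC(g)$ holds for any two nonzero $f,g$, which is exactly why G-polynomials are superfluous there.

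I expect no genuine obstacle, as the argument is a direct computation; the points needing care are keeping the two flavours of the $\mathcal{B}^e$-multipliers $\sigma_f,\sigma_g$ straight across the first/second-type cases, verifying that the single reduction step literally meets the requirements in \textsc{NormalForm} (divisibility of the leading monomials and the Euclidean-norm condition on the remainder), and being precise that ``reduces to an S-polynomial'' here means ``reduces to a scalar multiple of one, hence to zero once that S-polynomial does'' --- the scalar $n$ being a unit precisely when the G-polynomial already is, up to the usual unit ambiguity, an S-polynomial.
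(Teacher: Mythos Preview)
Your proof is correct and follows essentially the same route as the paper's own argument: parametrize the B\'ezout coefficients as $(1+nr,-n)$ using $\LC(g)=r\LC(f)$, expand the G-polynomial, perform a single lm-reduction by $\sigma_f f$, and identify the remainder $r\,\sigma_f\tail(f)-\sigma_g\tail(g)$ with the corresponding S-polynomial via $a_f=r$, $a_g=1$. Your write-up is in fact a bit more careful than the paper's --- you explicitly check that the subtraction of $\sigma_f f$ is a legitimate lm-reduction in the sense of \textsc{NormalForm}, and you handle first- and second-type G-polynomials in one stroke --- but the mathematical content is the same.
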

\noindent
For $f\in \mathcal{P}\setminus\{0\}$, we iteratively define $\tail^0(f):=f$ and $\tail^i(f):=\tail(\tail^{i-1}(f))$ for $i\geq 1$.
\begin{lemma}\label{ProductCriterion} (Product criterion, generalization of  \cite{EderPfisterPopescu18,Lichtblau12})~\\
Let $f, g \in \mathcal{P}\setminus\{0\}$ and $w\in \mathcal{B}$, such that
\begin{enumerate}
    \item $\LC(f)$ and $\LC(g)$ are coprime over $\mathcal{R}$,
    \item $\LM(f)$ and $\LM(g)$ only have trivial overlaps and
    \item for all $i,j\geq 1$
    the inequality \; $\LM(\tail^i(f)) w \LM(g) \neq \LM(f) w \LM(\tail^j(g))$ takes place.
\end{enumerate}
Then $s:=\textup{spoly}_2^w(f, g )$ reduces to zero w.r.t. $\{f, g\}$.
\begin{proof}
Under the assumptions \textit{1.} and \textit{2.} we have $s = f w \LT(g) - \LT(f) w g = f w (g-\tail(g))- (f-\tail(f)) w g = \tail(f) w g - f w \tail(g)$. Note that $\tail(f) w g$ reduces to zero w.r.t. $g$ and $f w \tail(g)$ reduces to zero w.r.t. $f$.

By \textit{3.} we can assume without loss of generality that $\LT(s)=\LT(\tail(f)) w \LT(g)$. Then $s$ reduces to $s':=s-\LT(\tail(f)) w g$ and $\LM(s')\prec\LM(s)$. Again by $(3)$ there is no cancellation of leading terms and, since $\prec$ is a well-ordering, we iteratively see that $s$ reduces to zero.
\end{proof}
\end{lemma}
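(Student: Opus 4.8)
The plan is to make $s:=\textup{spoly}_2^w(f,g)$ completely explicit, split it into two pieces that reduce to zero separately, and then use hypothesis 3 to control the reduction of their difference. First I would normalise the coefficients. Since $\mathcal{R}$ is a commutative domain, scalars are central in $\mathcal{P}=\mathcal{R}\langle X\rangle$ and slide past words freely; by hypothesis 1, $\LC(f)$ and $\LC(g)$ are coprime, so $\lcm(\LC(f),\LC(g))=\LC(f)\LC(g)$ up to a unit and in \Cref{DefiFirstSecondTypeSGPoly} we may take $a_f=\LC(g)$, $a_g=\LC(f)$. Then
\[
s=\LC(g)\,f\,w\,\LM(g)-\LC(f)\,\LM(f)\,w\,g=f\,w\,\LT(g)-\LT(f)\,w\,g ,
\]
and substituting $\LT(g)=g-\tail(g)$, $\LT(f)=f-\tail(f)$ and cancelling the common term $f\,w\,g$ yields the key identity $s=\tail(f)\,w\,g-f\,w\,\tail(g)$. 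Hypothesis 2 is what makes this legitimate as a ``second type'' object: $\LM(f)$ and $\LM(g)$ do not overlap, so they meet in $s$ only through the inserted word $w$.

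Next I would observe that each half reduces to zero on its own. Writing $\tail(f)\,w\,g$ as a sum over the monomials $\mu$ occurring in $\tail(f)$ of terms $c\,\mu\,w\,g$, a single lm-reduction step by $g$ kills such a term, since its leading monomial $\mu\,w\,\LM(g)$ has $\LM(g)$ as a right factor and its leading coefficient is a multiple of $\LC(g)$; equivalently $\tail(f)\,w\,g$ lm-reduces in one step to $\tail^2(f)\,w\,g$, so iterating down the finitely many tail terms of $f$ it reduces to $0$ with respect to $\{g\}$. Symmetrically, $f\,w\,\tail(g)$ reduces to $0$ with respect to $\{f\}$, now using $\LM(f)$ as a left factor.

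Finally I would reduce the difference, which is where hypothesis 3 does the real work. The only thing that can go wrong is a cancellation of leading terms between the two halves that produces a leading monomial divisible by neither $\LM(f)$ nor $\LM(g)$, or that stalls the process. I would maintain the invariant that the current polynomial in the reduction chain has the shape $\tail^i(f)\,w\,g-f\,w\,\tail^j(g)$ for some $i,j\geq 1$, starting from $(i,j)=(1,1)$. Hypothesis 3 gives $\LM(\tail^i(f))\,w\,\LM(g)\neq\LM(f)\,w\,\LM(\tail^j(g))$, so the two dominant monomials do not cancel and $\LM$ of the current polynomial is the larger of the two; if it comes from the first half, lm-reduce with $g$ to pass from $\tail^i(f)$ to $\tail^{i+1}(f)$, otherwise lm-reduce with $f$ to pass from $\tail^j(g)$ to $\tail^{j+1}(g)$. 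In either case the result is of the same shape with a strictly smaller $\LM$ (using that $\preceq$ is a well-ordering), and since $\tail^i(f)$ and $\tail^j(g)$ vanish for large $i,j$ the chain ends at $0$.

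The main obstacle I expect is the inductive bookkeeping in the last step: one must verify that after each reduction the polynomial genuinely has the form $\tail^i(f)\,w\,g-f\,w\,\tail^j(g)$ with the updated indices, that hypothesis 3 is being invoked for exactly those indices, and — using the cancellation structure of the free monoid together with hypotheses 2 and 3 — that no monomial occurring in one half accidentally coincides with the dominant monomial of the other during the reduction. The quiet use of centrality of $\mathcal{R}$ in the first step should also be stated explicitly.
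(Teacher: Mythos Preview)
Your proposal is correct and follows essentially the same route as the paper: derive the identity $s=\tail(f)\,w\,g-f\,w\,\tail(g)$ from coprimality of the leading coefficients, note that each summand reduces to zero against $g$ respectively $f$, and then use hypothesis~3 to prevent cancellation of the competing leading terms so that an inductive lm-reduction terminates. Your explicit invariant $\tail^i(f)\,w\,g-f\,w\,\tail^j(g)$ and your acknowledgement of the monomial-coincidence issue make the iteration more transparent than the paper's terse ``iteratively see that $s$ reduces to zero'', but the underlying argument is the same.
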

\noindent
The preceding result contains strong conditions to discard S-polynomials. We capture this in the following remark.
\begin{remark}~\\
The commutative version of Buchberger's product criterion in \cite{EderPfisterPopescu18,Lichtblau12} states, that the S-polynomial reduces to zero, if the leading terms are coprime over $\IZ [X]$.

Condition \textit{3.} or rather its negation describes a very specific relation between the terms of $f$ and $g$. There is only a finite amount of $w\in \mathcal{B}$, that satisfy such relation and are at the same time considered in \textsc{BuchbergerAlgorithm}, because we only compute up to a certain length.

The version over fields for this criterion is much simpler, because then we only consider $w$ to be the empty word (which clearly satisfies \textit{3.}). Moreover, \textit{1.} is redundant and Buchberger's product criterion states that an S-polynomial reduces to zero when the leading monomials have only trivial overlap relations.
\end{remark}
\noindent
We give examples for situations in which no possibilities for discarding critical pairs could be found (yet) and which are unique for the commutative case.
\begin{example}~\\
If $\LM(f)$ and $\LM(g)$ have no nontrivial overlap and the leading coefficients are not coprime, i.e. $\gcd(\LC(f), \LC(g))\neq 1$, then we can make no \emph{a priori} statement about reduction. This only applies to second type S- and G-polynomials. Take for example $f=4xy+x, g=6zy+z \in \IZ\langle X \rangle=\IZ\langle x, y, z \rangle$ in the degree left lexicographical ordering with $x \succ y \succ z$. In the sense of \Cref{DefinitionOverlap}, the leading monomials have no nontrivial overlap. Then both
\begin{align*}
\begin{array}{ll}
            & \textup{spoly}_2^1(f, g)=3fzy-2xyg=3xzy-2xyz \\
\mbox{and}  & \textup{gpoly}_2^1(f, g)=(-1)fzy + 1 xy  g=2xyzy+xyz-xzy
\end{array}
\end{align*}
do not reduce any further. Thus, they must be added to the Gr\"obner basis just as any other second type S- and G-polynomial.
Finally, the Gr\"obner basis of $\langle 4xy+x, 6zy+z \rangle$ with respect to classical monomial orderings 
is indeed infinite, because it contains several infinite parametrizable series like $\{ zy^i zy-zy^{i+1} z : i\geq 0 \}$.

\begin{verbatim}
LIB "freegb.lib";
ring r = ZZ,(x,z,y),rp;
ring R = freeAlgebra(r,7);
ideal I = 4*x*y+x, 6*z*y+z;
option(redSB);option(redTail);
ideal J = twostd(I);
\end{verbatim}

When the leading coefficients are not coprime, no statement for S- and G-polynomials of the first type can be made. 
For example, in the case of $f=4xy+y, g=6yz+y$ we have $\textup{spoly}_1^{xyz}(f, g)=3fz-2xg = 3yz-2xy$ and $\textup{gpoly}_1^{xyz}(f, g)=(-1)fz+1xg=2xyz-yz+xy$ which do not reduce any further.

\begin{verbatim}
LIB "freegb.lib";
ring r = integer,(x,y,z),dp;
ring R = freeAlgebra(r,7);
ideal I = 4*x*y+y, 6*y*z+y;
option(redSB);option(redTail);
ideal J = twostd(I);
\end{verbatim}

The Gr\"obner basis of $\langle 4xy+y, 6yz+y \rangle$ with respect to classical monomial orderings seem to be infinite as the one above. This time we see infinite parametrizable series like $\{ y z^i y-y^2 z^{i} : i\geq 0 \}$.
\end{example}

\begin{remark}~\\
In the commutative case, according to \cite{EderPfisterPopescu18}, 
a pair $\{f, g\}$ 
with $\LM(f)=\LM(g)$ can be replaced 
by the new pair $\{\textup{spoly}(f, g), \textup{gpoly}(f, g)\}$. 
Now set $\LM(f)=\LM(g)=:t$, then in the definition of S- and G-polynomials 
of the first type we have $\tau_f=\tau_g=1 \otimes 1$ and therefore $\textup{spoly}_1^t(f, g)=a_f f - a_g g$ and $\textup{gpoly}_1^t(f, g)=b_f f + b_g g$. This yields a linear equation
\begin{align*}
\begin{pmatrix}\textup{spoly}_1^t(f, g) \cr \textup{gpoly}_1^t(f, g) \cr \end{pmatrix}
=
\begin{pmatrix}a_f & -a_g \cr b_f & b_g \cr \end{pmatrix}
\begin{pmatrix}f \cr g \cr \end{pmatrix},
\end{align*}
where the defining matrix has determinant $a_f b_g + a_g b_f=1$, thus it is invertible over $\mathcal{R}$! Hence, we can recover $f$ and $g$ back from their S- and G- polynomials and replace them. The importance of this statement  for the commutative case was discussed in \cite{EderPfisterPopescu18}. Its effectiveness carries over to the non-commutative case.
\end{remark}
\noindent
The following two lemmata are chain criteria, which are based on the idea to have two critical pairs and derive a third one from them under certain conditions. The commutative versions for both criteria were proven in \cite{EderPfisterPopescu18}.

Assertion for both upcoming theorems is the following. Given three polynomials $f, g, h\in \mathcal{P}\setminus\{0\}$, we assume for each pair $p, q \in \{f, g, h\}$, that $\textsc{lcm}(\LM(p), \LM(q))\neq \emptyset$, where $\textsc{lcm}(\bullet,\bullet)$ is defined as in \Cref{DefinitionOverlap}. For each such pair $p,q$ consider monomials $T_{pq}\in\mathcal{B}$ and $\tau_{pq} \in \mathcal{B}^e$ with
\begin{align}\label{BedingungKettenkrit}
\boxed{T_{pq} \in \textsc{lcm}(\LM(p), \LM(q))\hspace{1cm}
\tau_{pq} \LM(p)=T_{pq}\hspace{.2cm} \hspace{1cm}
T_{pq}=T_{qp}.}
\end{align}

\begin{lemma}\label{SChainCrit} (S-chain criterion, generalization of \cite{EderPfisterPopescu18,Lichtblau12})\\
Let $\mathcal{G} \subseteq \mathcal{P}\setminus \{0\}$ and $f, g, h\in \mathcal{G}$ with $\LC(f) \mid \lcm(\LC(g), \LC(h))$ over $\mathcal{R}$. For every pair $p, q \in \{f, g, h\}$ we assume that $\textsc{lcm}(\LM(p), \LM(q))\neq \emptyset$. There exist $T_{pq}\in\mathcal{B}$ and $\tau_{pq} \in \mathcal{B}^e$, such that \Cref{BedingungKettenkrit} holds. Assume that $T_{hg}=T_{gh}$ is divisible by both $T_{hf}$ and $T_{gf}$.\\
If $\textup{spoly}_1^{T_{fg}}(f, g)$ and $\textup{spoly}_1^{T_{fh}}(f, h)$ both have strong Gr\"obner representations w.r.t. $\mathcal{G}$, then so does $\textup{spoly}_1^{T_{gh}}(g, h)$.
\begin{proof}
Let $c_{pq}:=\dfrac{\lcm(\LC(p), \LC(q))}{\LC(p)}$ for $p, q \in \{f, g, h\}$. Then one can check, that
\begin{align*}
	& 	\dfrac{c_{hg}}{c_{hf}} \delta_{gf}\, \textup{spoly}_1^{T_{fh}}(f, h)- \dfrac{c_{gh}}{c_{gf}} \delta_{hf}\, \textup{spoly}_1^{T_{fg}}(f, g)\\
=	&		c_{gh} \delta_{hf} \tau_{gf} g - c_{hg} \delta_{gf} \tau_{hf} h + \left(\dfrac{c_{hg} c_{fh}}{c_{hf}}\delta_{gf} \tau_{fh} - \dfrac{c_{gh} c_{fg}}{c_{gf}} \delta_{hf} \tau_{fg}\right)f.
\end{align*}
Using relations for the monomial expressions $\tau_{pq}, T_{pq}, \delta_{pq}$ and the coefficients $c_{pq}$, we see that the first term on the right hand side is equal to $\textup{spoly}_1^{T_{gh}}(g, h)$ and we obtain
\begin{align*}
\textup{spoly}_1^{T_{gh}}(g, h)=
\dfrac{c_{hg}}{c_{hf}} \delta_{gf}\, \textup{spoly}_1^{T_{fh}}(f, h)- \dfrac{c_{gh}}{c_{gf}} \delta_{hf}\, \textup{spoly}_1^{T_{fg}}(f, g),
\end{align*}
which shows that $\textup{spoly}_1^{T_{gh}}(g, h)$ has a strong Gr\"obner representation w.r.t. $\mathcal{G}$. This works analogously for second type S-polynomials $\textup{spoly}_2^w(g, h)$ or $\textup{spoly}_2^{\tilde{w}} (h, g)$, if we choose $w$ or $\tilde{w}$, such that either $\LM(g) w \LM(h)=T_{gh}$ or $\LM(h) \tilde{w} \LM(g)=T_{hg}$.
\end{proof}
\end{lemma}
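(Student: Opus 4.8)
The plan is to reduce the whole statement to one explicit $\mathcal{R}$-monomial identity and then transport strong Gr\"obner representations across it. First I would fix the book-keeping: for $p,q\in\{f,g,h\}$ set $c_{pq}:=\lcm(\LC(p),\LC(q))/\LC(p)$, so that by \Cref{Definition_S_and_G_Polynomial} we have $\textup{spoly}_1^{T_{pq}}(p,q)=c_{pq}\,\tau_{pq}\,p-c_{qp}\,\tau_{qp}\,q$, and since $c_{pq}\LC(p)=\lcm(\LC(p),\LC(q))=c_{qp}\LC(q)$ and $T_{pq}=T_{qp}$ the two leading terms cancel, so $\LM\bigl(\textup{spoly}_1^{T_{pq}}(p,q)\bigr)\prec T_{pq}$. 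The hypothesis $\LC(f)\mid\lcm(\LC(g),\LC(h))$ is exactly what forces $\lcm(\LC(h),\LC(f))\mid\lcm(\LC(h),\LC(g))$ and $\lcm(\LC(g),\LC(f))\mid\lcm(\LC(g),\LC(h))$, hence $c_{hg}/c_{hf}$ and $c_{gh}/c_{gf}$ lie in $\mathcal{R}$, and it yields the crucial scalar coincidence
\begin{align*}
\frac{c_{hg}\,c_{fh}}{c_{hf}}=\frac{c_{gh}\,c_{fg}}{c_{gf}}=\frac{\lcm(\LC(g),\LC(h))}{\LC(f)}\in\mathcal{R}.
\end{align*}

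Next I would set up the monomial multipliers. Since $T_{gh}=T_{hg}$ is divisible by both $T_{hf}$ and $T_{gf}$, choose $\delta_{gf},\delta_{hf}\in\mathcal{B}^e$ with $\delta_{gf}\cdot T_{hf}=T_{gh}$ and $\delta_{hf}\cdot T_{gf}=T_{gh}$; using \Cref{BedingungKettenkrit} this makes $\tau_{gh}:=\delta_{hf}\tau_{gf}$ and $\tau_{hg}:=\delta_{gf}\tau_{hf}$ admissible overlap multipliers for $\textup{spoly}_1^{T_{gh}}(g,h)$. Expanding both sides of the claimed identity
\begin{align*}
\textup{spoly}_1^{T_{gh}}(g,h)=\frac{c_{hg}}{c_{hf}}\,\delta_{gf}\,\textup{spoly}_1^{T_{fh}}(f,h)-\frac{c_{gh}}{c_{gf}}\,\delta_{hf}\,\textup{spoly}_1^{T_{fg}}(f,g)
\end{align*}
with the formulas above, the $g$- and $h$-contributions on the right assemble into $c_{gh}\tau_{gh}g-c_{hg}\tau_{hg}h=\textup{spoly}_1^{T_{gh}}(g,h)$, while the two $f$-contributions cancel: their scalar factors agree by the displayed coincidence, and their monomial factors $\delta_{gf}\tau_{fh}$ and $\delta_{hf}\tau_{fg}$ both send $\LM(f)$ to $\delta_{gf}T_{hf}=T_{gh}=\delta_{hf}T_{gf}$. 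Verifying this identity — and in particular arranging $\delta_{gf},\delta_{hf}$ coherently with the common occurrences of $T_{fg}$ and $T_{fh}$ inside $T_{gh}$ so that $\delta_{gf}\tau_{fh}=\delta_{hf}\tau_{fg}$ holds on the nose in $\mathcal{B}^e$ — is the technical heart of the argument, and I expect it to be the main obstacle.

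Finally, I would conclude by transporting representations: if $p$ has a strong Gr\"obner representation w.r.t. $\mathcal{G}$, so does $\lambda\sigma p$ for $\lambda\in\mathcal{R}\setminus\{0\}$ and $\sigma\in\mathcal{B}^e$, because a monomial well-ordering is compatible with multiplication, hence the unique maximal $\LM(h_ig_i)$ of the representation is simply shifted by $\sigma$ and stays unique. Applying this to the two summands on the right (whose maximal leading monomials are moreover strictly below $T_{gh}$ by the first paragraph) and concatenating the two resulting representations exhibits $\textup{spoly}_1^{T_{gh}}(g,h)$ as a $\mathcal{G}$-representation all of whose leading monomials lie below $T_{gh}$; a short check that the two top parts do not collide so as to spoil uniqueness then upgrades this to a genuine \emph{strong} Gr\"obner representation. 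The second-type case is identical after replacing $T_{fg},T_{fh},T_{gh}$ by the corresponding products $\LM(\,\cdot\,)\,w\,\LM(\,\cdot\,)$, chosen so that $\LM(g)w\LM(h)=T_{gh}$ or $\LM(h)\tilde w\LM(g)=T_{hg}$, and $\textup{spoly}_1$ by $\textup{spoly}_2$.
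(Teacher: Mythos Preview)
Your approach is essentially the same as the paper's: define $c_{pq}=\lcm(\LC(p),\LC(q))/\LC(p)$, introduce $\delta_{gf},\delta_{hf}\in\mathcal{B}^e$ from the divisibility hypothesis, verify the identity $\textup{spoly}_1^{T_{gh}}(g,h)=\frac{c_{hg}}{c_{hf}}\delta_{gf}\,\textup{spoly}_1^{T_{fh}}(f,h)-\frac{c_{gh}}{c_{gf}}\delta_{hf}\,\textup{spoly}_1^{T_{fg}}(f,g)$, and conclude. You supply considerably more detail than the paper does (the integrality of the coefficient quotients via $\LC(f)\mid\lcm(\LC(g),\LC(h))$, the scalar coincidence that kills the $f$-contribution, the choice $\tau_{gh}=\delta_{hf}\tau_{gf}$, $\tau_{hg}=\delta_{gf}\tau_{hf}$, and the care about uniqueness when concatenating representations), but the architecture is identical.
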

\noindent
We give a similar criterion for G-polynomials.
\begin{lemma}\label{GChainCrit} (G-chain criterion, generalization of \cite{EderPfisterPopescu18,Lichtblau12})\\
Let $\mathcal{G} \subseteq \mathcal{P}\setminus \{0\}$ and $f, g, h\in \mathcal{G}$. For every pairs $p, q \in \{f, g, h\}$ let $T_{pq}\in\mathcal{B}$ and $\tau_{pq} \in \mathcal{B}^e$, such that \Cref{BedingungKettenkrit} holds. Additionally we assume, that $\LC(f) \mid \mbox{gcd}(\LC(g), \LC(h))$ and that $T_{hg}=T_{gh}$ is divisible by both $T_{hf}$ and $T_{gf}$.\\
Then $\textup{gpoly}_1^{T_{gh}}(g,h)$ has a strong Gr\"obner representation w.r.t. $\mathcal{G}$.
\begin{proof}
The divisibility condition on the leading coefficient of $f$ yields an element $d\in \mathcal{R}$ with $d\,\LC(f)=\mbox{gcd}(\LC(g), \LC(h))$. Furthermore, there exist $\delta_{gf}, \delta_{hf}\in \mathcal{B}^e$, such that $\delta_{gf} T_{hf}=T_{hg}$ and $\delta_{hf} T_{gf}=T_{gh}$.\\
First observe that
\begin{align*}
     & \textup{gpoly}_1^{T_{gh}}	(g,h)=\mbox{gcd}(\LC(g), \LC(h)) T_{gh}+b_g \tau_{gh}\, \tail(g)+b_h \tau_{hg}\, \tail(h), \\
     & \textup{spoly}_1^{T_{fg}}	(f,g)=	\dfrac{\LC(g)}{\LC(f)}\tau_{fg} f-\tau_{gf} g=\dfrac{\LC(g)}{\LC(f)}\tau_{fg}\,  \tail(f)-\tau_{gf} \, \tail(g) \\
\mbox{and} \hspace{.5cm}  & \textup{spoly}_1^{T_{fh}}	(f,h)=	\dfrac{\LC(h)}{\LC(f)}\tau_{fh} f-\tau_{hf} h=\dfrac{\LC(h)}{\LC(f)}\tau_{fh} \, \tail(f)-\tau_{hf}\,  \tail(h).
\end{align*}
Since $T_{fh}$ divides $T_{gh}$, there exists a $w\in \mathcal{B}^e$ with $w \LM(f) = T_{gh}$ and 
\begin{align*}
w \LM(f)=T_{gh}=\delta_{gf} T_{fh}=\delta_{gf} T_{fh} \LM(f).
\end{align*}
Hence, $w=\delta_{gf} \tau_{fh}$ and analogously $w=\delta_{hf} \tau_{fg}$. Moreover, $dw \LC(f)\LM(f)=\mbox{gcd}(\LC(g), \LC(h)) T_{gh}$ and we obtain
\begin{align*}
&\, \textup{gpoly}_1^{T_{gh}} (g,h) - d w f + b_g \delta_{hf}\, \textup{spoly}_1^{T_{fg}} (f,g) + b_h \delta_{gf} \,\textup{spoly}_1^{T_{fh}}(f,h)\\
=&\, \mbox{gcd}(\LC(g), \LC(h)) T_{gh} - (\mbox{gcd}(\LC(g), \LC(h)) T_{gh} + dw\, \tail(f))\\
&\, + b_g \tau_{gh} \, \tail(g) + b_g \delta_{hf} \left( \dfrac{\LC(g)}{\LC(f)}\tau_{fg} \,  \tail(f)-\tau_{gf} \, \tail(g) \right)\\
&\, + b_h \tau_{hg} \,  \tail(h) +b_h \delta_{gf} \left( \dfrac{\LC(h)}{\LC(f)}\tau_{fh}\, \tail(f)-\tau_{hf} \,  \tail(h) \right)\\
=&\, b_g \tau_{gh}\, \tail(g)+b_h \tau_{hg}\, \tail(h) - dw \, \tail(f)+b_g \dfrac{\LC(g)}{\LC(f)} \delta_{hf} \tau_{fg}\, \tail(f)\\
&\, - b_g \underbrace{\delta_{hf} \tau_{gf}}_{=\tau_{gh}}\, \tail(g) + b_h \dfrac{\LC(h)}{\LC(f)}\underbrace{\delta_{gf} \tau_{fh}}_{=\delta_{hf} \tau_{fg}}\, \tail(f) - b_h \underbrace{\delta_{gf} \tau_{hf}}_{=\tau_{hg}} \, \tail(h)\\
=&\, \left( \dfrac{b_g \LC(g)+\LC(h)}{\LC(f)} \delta_{hf} \tau_{fg}- dw \right) \tail(f)=d (\delta_{hf} \tau_{fg} -w) \, \tail(f)=0.
\end{align*}
Finally, we can write $\textup{gpoly}_1^{T_{gh}}(g,h)$ as
\begin{align*}
\textup{gpoly}_1^{T_{gh}}(g,h)=dwf-b_g \delta_{hf}\, \textup{spoly}_1^{T_{fg}}(f,g)-b_h \delta_{gf}\,\textup{spoly}_1^{T_{fh}}(f,h),
\end{align*}
which is a strong Gr\"obner representation.
\end{proof}
\end{lemma}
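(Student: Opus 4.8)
The plan is to mirror, on the $\textup{gpoly}$ side, the explicit linear-combination identity that drives \Cref{SChainCrit}: I want to write $\textup{gpoly}_1^{T_{gh}}(g,h)$ as a $\mathcal{P}^e$-combination of $f$, $\textup{spoly}_1^{T_{fg}}(f,g)$ and $\textup{spoly}_1^{T_{fh}}(f,h)$ in which the contribution of $f$ alone carries the leading monomial $T_{gh}$ while the two S-polynomial contributions stay strictly below it; substituting the strong Gr\"obner representations that those S-polynomials are assumed to possess (just as in the companion criterion) then delivers the claim.

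First I would produce the cofactors. From $\LC(f)\mid\mbox{gcd}(\LC(g),\LC(h))$ pick $d\in\mathcal{R}$ with $d\,\LC(f)=\mbox{gcd}(\LC(g),\LC(h))$, and from $T_{hf}\mid T_{gh}$, $T_{gf}\mid T_{gh}$ (recall $T_{gh}=T_{hg}$) pick $\delta_{gf},\delta_{hf}\in\mathcal{B}^e$ with $\delta_{gf}T_{hf}=T_{gh}$ and $\delta_{hf}T_{gf}=T_{gh}$. The essential monomial fact is that there is a single $w\in\mathcal{B}^e$ with $w\LM(f)=T_{gh}$ which equals both $\delta_{gf}\tau_{fh}$ and $\delta_{hf}\tau_{fg}$: since $\delta_{gf}T_{fh}=\delta_{gf}\tau_{fh}\LM(f)=T_{gh}$ and $\delta_{hf}T_{fg}=\delta_{hf}\tau_{fg}\LM(f)=T_{gh}$, both products realise the same occurrence of $\LM(f)$ inside $T_{gh}$ and hence coincide; the same type of check yields $\delta_{hf}\tau_{gf}=\tau_{gh}$ and $\delta_{gf}\tau_{hf}=\tau_{hg}$.

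With this bookkeeping in place I would write down
\begin{align*}
\textup{gpoly}_1^{T_{gh}}(g,h)=d\,w\,f-b_g\,\delta_{hf}\,\textup{spoly}_1^{T_{fg}}(f,g)-b_h\,\delta_{gf}\,\textup{spoly}_1^{T_{fh}}(f,h)
\end{align*}
and verify it by expanding each side into leading part plus tail: on the leading level the three $f$-monomials combine to $\tfrac{1}{\LC(f)}\bigl(d\,\LC(f)-b_g\LC(g)-b_h\LC(h)\bigr)w f=0$ by the B\'ezout identity $\mbox{gcd}(\LC(g),\LC(h))=b_g\LC(g)+b_h\LC(h)$, while the tails cancel after substituting $\tail(\textup{spoly}_1^{T_{fg}}(f,g))$ and $\tail(\textup{spoly}_1^{T_{fh}}(f,h))$ and using $w=\delta_{gf}\tau_{fh}=\delta_{hf}\tau_{fg}$. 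This is the routine computation, and it is the $\textup{gpoly}$-mirror of the display in the proof of \Cref{SChainCrit}. To read off a strong Gr\"obner representation, note that $dwf$ has leading monomial $w\LM(f)=T_{gh}=\LM(\textup{gpoly}_1^{T_{gh}}(g,h))$ with leading coefficient $d\,\LC(f)=\mbox{gcd}(\LC(g),\LC(h))\neq 0$, whereas $\LM(\textup{spoly}_1^{T_{fg}}(f,g))\prec T_{fg}=T_{gf}\preceq T_{gh}$ (leading-term cancellation in the S-polynomial together with compatibility of the well-ordering with word divisibility), so every term of $\delta_{hf}\,\textup{spoly}_1^{T_{fg}}(f,g)$ has leading monomial $\prec\delta_{hf}T_{gf}=T_{gh}$, and symmetrically for the $(f,h)$-term; inserting the strong Gr\"obner representations of the two S-polynomials over $\mathcal{G}$ preserves these strict estimates, so $dwf$ remains the unique dominant summand. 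The variant for a second type $\textup{gpoly}_2^{w}(g,h)$ is handled by choosing the monomial realising $\LM(g)w\LM(h)=T_{gh}$, exactly as in \Cref{SChainCrit}.

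The main obstacle is the non-commutative monomial bookkeeping rather than the coefficient arithmetic: one must show that the overlap witnesses $\tau_{pq}$ for the three pairs, together with the newly introduced $\delta_{gf},\delta_{hf},w$, fit together consistently (the identities $\delta_{hf}\tau_{gf}=\tau_{gh}$, $\delta_{gf}\tau_{hf}=\tau_{hg}$, $\delta_{gf}\tau_{fh}=\delta_{hf}\tau_{fg}=w$), which is delicate because in $\mathcal{B}$ a word may embed in another in several places and ``cancelling'' a factor is not automatic --- this is precisely where the boxed compatibility conditions of \Cref{BedingungKettenkrit} ($T_{pq}=T_{qp}$, $\tau_{pq}\LM(p)=T_{pq}$) and the hypothesis that $T_{gh}$ is divisible by both $T_{hf}$ and $T_{gf}$ enter. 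A secondary point to be careful about is that the B\'ezout coefficients $b_g,b_h$ are not unique, so the identity must be verified for the specific pair entering $\textup{gpoly}_1^{T_{gh}}(g,h)$, and that one genuinely needs the two S-polynomials to carry strong Gr\"obner representations before regrouping everything over $\mathcal{G}$, since the naive regrouping otherwise leaves two summands of leading monomial $T_{gh}$.
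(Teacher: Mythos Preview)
Your proposal follows essentially the same route as the paper: you set up the same cofactors $d,\delta_{gf},\delta_{hf},w$, establish the same monomial identities $w=\delta_{gf}\tau_{fh}=\delta_{hf}\tau_{fg}$, $\delta_{hf}\tau_{gf}=\tau_{gh}$, $\delta_{gf}\tau_{hf}=\tau_{hg}$, and arrive at the identical key identity
\[
\textup{gpoly}_1^{T_{gh}}(g,h)=d\,w\,f-b_g\,\delta_{hf}\,\textup{spoly}_1^{T_{fg}}(f,g)-b_h\,\delta_{gf}\,\textup{spoly}_1^{T_{fh}}(f,h),
\]
which is exactly the display the paper's proof ends with.

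The one substantive difference is in the final step. You explicitly invoke an additional hypothesis, namely that $\textup{spoly}_1^{T_{fg}}(f,g)$ and $\textup{spoly}_1^{T_{fh}}(f,h)$ already possess strong Gr\"obner representations with respect to $\mathcal{G}$ (mirroring the hypothesis in \Cref{SChainCrit}), and then substitute those to obtain a genuine representation over $\mathcal{G}$ with $dwf$ as the unique dominant summand. The paper's stated lemma does \emph{not} carry this hypothesis; it simply declares the three-term expression above to be ``a strong Gr\"obner representation'' and stops. As you correctly point out in your closing remarks, the S-polynomials are not themselves elements of $\mathcal{G}$, and expanding them naively into their defining $f,g,h$-summands yields several terms with leading monomial $T_{gh}$, so the paper's final assertion is somewhat elliptical. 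Your added hypothesis is the natural way to make that step rigorous, and it aligns the statement with its companion \Cref{SChainCrit}.
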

\noindent
We conclude, that the well-known criteria for S- and G-polynomials from the commutative case can also be applied in the commu\-ta\-tive case with modifications, if we distinguish between first and second type S- and G-polynomials. Computations show how hard these requirements are to be satisfied compared to the commu\-ta\-tive case by specifically counting the number of applications of product and chain criteria. 
\section{Examples}
\noindent
We give examples for Gr\"obner bases that have been computed up to a certain length bound over the integers. These examples also show that although computing over $\IZ$ delivers infinite results much more often than when computing over fields, commutative Gr\"obner bases over $\IZ$ can be finite as well.

We start with the examples from \cite{Apel2000} until \Cref{Apel4}. Let $\mathcal{P}=\IZ\langle x, y, z \rangle$ with the degree left lexicographical ordering and $ x\succ y \succ z$ (if not indicated otherwise).

\begin{example}\label{ExampleExplained}~\\
We consider the ideal $\mathcal{I}=\langle f_1=yx-3xy-3z, f_2=zx-2xz+y, f_3= zy-yz-x \rangle \subset \mathcal{P}$. 
We investigated it over $\IQ\langle x, y, z \rangle$ in \cite{LSZ20} 
 where we provided detailed comments on syntax and commands of {\sc Singular:Letterplace}. 

\noindent
At first, we analyze this ideal over the field $\IQ$:
\begin{verbatim}
LIB "freegb.lib"; // initialization of free algebras
ring r = 0,(z,y,x),Dp; // degree left lex ord on z>y>x
ring R = freeAlgebra(r,7); // length bound is 7
ideal I = y*x - 3*x*y - 3*z, z*x - 2*x*z +y, z*y-y*z-x;
option(redSB); option(redTail); // for minimal reduced GB
option(intStrategy); // avoid divisions by coefficients
ideal J = twostd(I); // compute a two-sided GB of I
J; // prints generators of J  
\end{verbatim}
The output is a finite Gr\"obner basis
\[
\left\{ 4xy+3z, 3xz-y, 4yx-3z, 2y^2-3x^2, 2yz+x, 
3zx+y, 2zy-x, 3z^2-2x^2, 4x^3+x \right\}.
\]
As we see, original generators have decomposed. In order to compute their expressions in the Gr\"obner basis above, one can use the {\tt lift} command. In particular 
\[
yx - 3xy - 3z = -\frac{3}{4} (4xy+3z) + \frac{1}{4}(4yx-3z).
\]
Now, it seems from the form of leading monomials, that $\IQ\langle x, y, z \rangle/J$ is finite dimensional vector space. Let us check it:
\begin{verbatim}
LIB "fpadim.lib"; // load the library for K-dimensions
lpMonomialBasis(7,0,J); // compute all monomials 
// of length up to 7 in Q<x,y,z>/J
\end{verbatim}
which results in the set $\left\{1, z, y, x, x^2\right\}$, being a monomial $\IQ$-basis of $\IQ\langle x, y, z \rangle/J$.

\newpage

Now, we proceed to work over $\IZ$. For doing this, we need just one change in the code above,
\begin{verbatim}
LIB "freegb.lib"; //initialization of free algebras
ring r = integer,(z,y,x),Dp; //degree left lex ord z>y>x
ring R = freeAlgebra(r,7); // length bound is 7
ideal I = y*x - 3*x*y - 3*z, z*x - 2*x*z +y, z*y-y*z-x;
option(redSB); // Groebner basis will be minimal
option(redTail); // Groebner basis will be tail-reduced
ideal J = twostd(I); // compute a two-sided GB of I
J; // print generators of J  
\end{verbatim}

The output has plenty of elements in each degree (which is the same as length because of the degree ordering), what hints at potentially infinite Gr\"obner basis (what we confirm below) and the elements, which can be subsequently constructed, are 
\begin{align*}
\{ &   f_1, f_2, f_3, 12xy+9z, 9xz-3y, 6y^2-9x^2, 6yz+3x, \\
&   3z^2+2y^2-5x^2, 6x^3-3yz, 4x^2y+3xz, 3x^2z+3xy+3z, \\
&   2xy^2+3x^3+3yz+3x, 3xyz+3y^2-3x^2, 2y^3+x^2y+3xz, \\
&   2x^4+y^2-x^2, 2x^3y+3y^2z+3xy+3z, x^2yz+xy^2-x^3, \\
&   xy^2z-y^3+x^2y, x^5-y^3z-xy^2+x^3, y^3z^2-x^4y, \\
&   x^4z+x^3y+2y^2z+x^2z+3xy+3z, xy^3z-y^4+x^4-y^2+x^2, \\
&   xy^4z-y^5+x^2y^3, xy^5z-y^6+x^4y^2+y^4+x^4+2y^2-2x^2 \}
\end{align*}
Indeed, we can show that $\mathcal{I}$ contains an element with the leading monomial $xy^iz$ for all $i\geq 2$. Therefore this Gr\"obner basis is infinite, but can be presented in finite terms.
Note, that the original generators have been preserved in a Gr\"obner basis, while over $\IQ$ (as above)
 they were decomposed.
Also, over $\IQ$ the input ideal 
has a finite Gr\"obner basis of degree at most 3.
\end{example}

\begin{example}
\label{Torsion}
~\\
Let $\mathcal{I}=\langle f_1=yx-3xy-z, f_2=zx-xz+y, f_3=zy-yz-x \rangle \subset \mathcal{P}$. Then $\mathcal{I}$ has a finite strong Gr\"obner basis, namely
\begin{align*}
\left\{ f_1, f_2, f_3, 8xy+2z, 4xz-2y, 4yz+2x, 
2x^2-2y^2, 4y^2-2z^2, 2z^3-2xy \right\}.
\end{align*}
As we can see, the leading coefficients of the Gr\"obner basis above might vanish, if we pass to the field of characteristic 2. Therefore, the bimodule $M:=\IZ\langle x, y, z \rangle/I$ might have nontrivial 2-torsion, i.e.\ there is a nonzero submodule $T_2(M):=\{ p \in M: \exists \, n\in\IN_0 \; 2^n \cdot p \in I\}$.
In \citep{Hoffmann_Levandovskyy2019}, the classical method of Caboara and Traverso for computing colon (or quotient) ideals has been generalized to non-commutative  case. Using the fact that the ground ring is central (i.e.\ commutes with all variables), we follow that recipe and do the following:
\begin{verbatim}
LIB "freegb.lib"; //we will use position-over-term order
ring r = integer,(x,y,z),(c,dp); 
ring R = freeAlgebra(r,7,2); // 2==the rank of free bimodule
ideal I = y*x - 3*x*y - z, z*x - x*z +y, z*y-y*z-x;
option(redSB); option(redTail); 
ideal J = twostd(I);    module N; 
N = 2*ncgen(1)*gen(1)+ncgen(2)*gen(2),J*ncgen(1)*gen(1);
module SN = twostd(N);   SN;
\end{verbatim}
Above, {\tt gen(i)} stands for the $i$-th canonical basis vector (commuting with everything) and {\tt ncgen(i)} - for the $i$-th canonical generator of the free bimodule, which commutes only with constants. 
The output, which is a list of vectors, looks as follows: 
\begin{verbatim}
...
SN[9]=[0,z*z*z*ncgen(2)-x*y*ncgen(2)]
SN[10]=[2*ncgen(1),ncgen(2)]
SN[11]=[z*y*ncgen(1)-y*z*ncgen(1)-x*ncgen(1)]
...    
\end{verbatim}
\noindent
From this output we gather all vectors with 0 in the first component {\tt ncgen(1)*gen(1)}, and form an ideal of the generators from the second component, whose Gr\"obner basis is
\begin{align*}
\left\{ \,
zy-yz-x, zx-xz+y, yx+xy, \, 2yz+x, 
2xz-y, 2y^2-z^2, 4xy+z, x^2-y^2, \,z^3-xy \, \right\}.
\end{align*}
Another colon computation does not change this ideal, therefore it is the saturation ideal of $I$ at 2, denoted by  $L=I:2^{\infty} \subset \IZ\langle x, y, z \rangle$. It is the presentation for the 2-torsion submodule $T_2(M) =  \IZ\langle x, y, z \rangle L / I$ and, moreover,  
$2 \cdot L \subset I \subset L$ holds.
\end{example}


\begin{example}~\\
\label{Apel4}
In this example we have to run a Gr\"obner basis of $\langle f_1=zy-yz+z^2, f_2=zx+y^2, \, f_3=yx-3xy\rangle$
up to length bound $11$.
We use degree right lexicographical ordering and obtain a finite Gr\"obner basis
\begin{align*}
\{& zy-yz+z^2, zx+y^2 , yx-3xy, 2y^3+y^2z-2yz^2+2z^3,y^2z^2-4yz^3+6z^4,  \\&
y^4+27xy^2z-54xyz^2+54xz^3, \, 54xy^2z-y^3z-108xyz^2+108xz^3+62yz^3-124z^4,\\&
14z^5, 14yz^3-28z^4, 2yz^4-6z^5, 2xyz^3-4xz^4, xy^3z, 2z^6, 2xz^5\} . 
\end{align*}
As we can see from the leading terms, the corresponding module might have $2$- and $7$-torsion submodules.
There have been $17068$ critical pairs created, and internal total length of intermediate elements was $11$. The product criterion has been used $196$ times, while the chain criterion was invoked $36711$ times. Totally, up to $2.9$ GB of memory was allocated.

Comparing the data with increasing the length bound to the presumably unlucky 13, we had to create over $135300$ critical pairs, while the product criterion has been used $1876$ and the chain criterion $365367$ times. This illustrates the explosive behaviour of the number of critical pairs when dealing with rings as coefficients.

In the contrast, the Gr\"obner basis computation of the same input over $\IQ$ considered only $14$ critical pairs, went up to total degree $6$ of intermediate elements, used no product criterion and $9$ times the chain criterion with less than $1$ MB of memory. The result is 
\begin{align*}
\{ & zy-yz+z^2, zx+y^2 , yx-3xy,
2y^3+y^2z-2yz^2+2z^3,\\ & y^2z^2 - 2z^4, xy^2z-2xyz^2+2xz^3, yz^3-2z^4, z^5 \}.
\end{align*}
This demonstrates once again, how technically involved computations with free algebras over rings as coefficients are.
\end{example}
\newpage 
\begin{example}
\label{IwahoriHeckeA3}
~\\
The important class of Iwahori--Hecke algebras \cite{humphreys1990} is associated to Coxeter groups. These algebras are constructed by means of finite presentation over $\IZ[q, q^{-1}]$ where $q$ will later be specialized, most frequently to the root of unity over a finite field. Consider the Iwahori--Hecke algebra of type $A_3$, then it is presented 
as the factor-algebra of
$\IZ[q, q^{-1}] \langle x,y,z \rangle$
modulo the ideal
\[
\langle  
x^2 + (1-q)x - q, y^2 + (1-q)y - q, z^2 + (1-q)z - q, 
zx - xz, yxy - xyx, zyz - yzy
\rangle.
\]
We observe {\bf braid} relations between $x,y$ and $y,z$. In order to treat the ground ring $\IZ[q, q^{-1}]$ appropriately, we do the following: 
\begin{itemize}
\item introduce free variables {\tt q, iq}
where the latter denotes the forthcoming $q^{-1}$,
\item use a block ordering for the variables, giving eliminating preference to the block $x,y,z$,
\item to the ideal of relations above we insert new commutation relations (that is, {\tt q,iq} mutually commute with {\tt x,y,z}) and reciprocity relations for {\tt q,iq}.
\end{itemize}
 
\begin{verbatim}
LIB "freegb.lib";
ring r = integer,(x,y,z,iq,q),(a(1,1,1,0,0),Dp);
ring R = freeAlgebra(r,7);
ideal I = x^2 + (1-q)*x - q, y^2 + (1-q)*y - q, z^2 + (1-q)*z - q, 
z*x - x*z, y*x*y - x*y*x, z*y*z - y*z*y, 
bracket(q,x), bracket(q,y), bracket(q,z), 
bracket(iq,x), bracket(iq,y), bracket(iq,z), q*iq -1, iq*q-1;
option(redSB); option(redTail);
ideal J = twostd(I);
\end{verbatim}

\noindent
The resulting Gr\"obner basis is finite, and consists of the original relations (from the ideal $I$ above) and the only new generator $xyzx-yxyz$ of length $4$. We also observe, that no integers, other than $\pm 1$, appear among 
the coefficients of polynomials from $I$. 
Now we specialize $q$ to the primitive third root of unity.
\begin{verbatim}
ideal L = J, q^2+q+1;
L = twostd(L);
\end{verbatim}

\noindent In the output, as one would expect, only
{\tt q,iq} have been affected. The relation $iq+q+1=0$ has been used to replace {\tt iq} via $-q-1$. 
Since except for the minimal polynomial $q^2+q+1$ and commutativity relations, no $q$ appear as leading coefficients, we 
can proceed to the ground field $\IK:=\IQ[q]/\langle q^2+q+1 \rangle$. One of the possibilities to do this is the localization at $\IZ\setminus\{0\}$. Now, with the abilities of {\sc Letterplace} over fields we easily establish, that specialized over $\IK$, the Iwahori-Hecke algebra of type $A_3$ is finite-dimensional of dimension 24. Hence further computations with modules over this algebra can be carried on.
\end{example}

\begin{example}
\label{Binomial}
~\\
Over $K[X]$, an ideal is called \textbf{binomial}, if it is generated by polynomials of length at most two. A distinct property of binomial ideals, which is easy to prove, is that with respect to any monomial ordering, a binomial ideal possesses a Gr\"obner basis, consisting of binomials. This is not true over rings anymore, as, for instance, a Gr\"obner basis with respect to the degree reverse lexicographical ordering of $\{2x - 3y, xy-3x\}$
is $\{2x-3y,3y^2-9y, xy+x-6y\}$. \\
In the setting of a free algebra, the binomiality of a Gr\"obner basis still holds over $K\langle X\rangle$. As expected, it breaks over rings since in the very same example the commutativity relation $yx-xy$ is a binomial. Hence, a strong 
minimal Gr\"obner basis 
of $\{2x - 3y, xy-3x, yx-xy\} \subset \IZ\langle x, y \rangle$ is
\begin{align*}
\left\{2x-3y,3y^2-9y, xy+x-6y,yx+x-6y
\right\},
\end{align*}
which cannot be made binomial.
\end{example}




\section{Implementation}
\noindent
We have created a powerful implementation called {\sc Letterplace}
\cite{Letterplace2021, LSZ20}
in the framework of {\sc Singular} \cite{singular}.
Its' extension to coefficient rings like $\IZ$ addresses the following functions with the current release for ideals and subbimodules of a free bimodule of finite rank.
We provide a vast family of monomial orderings for ideals and submodules, such as degree right and left lexicographical, including three kinds of orderings, which eliminate variables or free bimodule components.
For modules, position-over-term and term-over-position constructions are available.
\begin{itemize}
    \item {\tt twostd}: a two-sided Gr\"obner basis of a module; when executed with respect to an elimination ordering, it allows to eliminate variables \cite{BoBo}, and thus to compute kernels of ring morphisms and preimages of ideals under such morphisms;
    \item {\tt reduce \text{(or }NF\text{)}}: a normal form of a vector (resp. a polynomial) with respect to a two-sided Gr\"obner basis of a submodule (resp. an ideal);
    \item {\tt syz}: a generating set of a syzygy bimodule \cite{BK07} of a module;
    \item {\tt modulo}: kernel of a bimodule homomorphism;
    \item {\tt lift}: a transformation matrix between a module and a submodule; in other words, {\tt lift} allows to express generators of a submodule in terms of generators of a module;
    \item {\tt liftstd}: (1) a two-sided Gr\"obner basis together with (2) a transformation matrix between the input module and its Gr\"obner basis, and, optionally, (3) a generating set of a syzygy bimodule of the input module; compared to running {\tt twostd, lift, syz} alone, this command does not add much computational overhead.
\end{itemize}
\noindent
{\bf Caveats:} As every software, which is intensively used, 
our implementation has some artefacts, which we cannot overcome and therefore describe as caveats.
\begin{itemize}
\item[a)] Computing with the options {\tt redSB} and {\tt redTail} enabled, sometimes the resulting Gr\"obner basis will not be minimal. This occurs only with rings as coefficients and cannot be changed at this time. Computing 
a Gr\"obner basis of the result one more time produces
a minimal Gr\"obner basis.
\item[b)] A computation, involving Gr\"obner bases, might stop with the following error message:
\begin{verbatim}
? degree bound of Letterplace ring is 9, but at least 
10 is needed for this multiplication   
\end{verbatim}
This is neither a bug nor an error. It indicates that internally a potentially Noetherian reduction has been invoked, what often happens for monomial orderings, which are not compatible with the length of monomials. We recommend to increase the length bound on the ring, and keep polynomials or vectors tail-reduced via {\tt  option(redTail)}.
\item[c)] In \Cref{Torsion} a built-in command {\tt modulo} can be used instead of the construction of the module {\tt SN} and gathering the vectors from the first component. However, because of the degree bound as in b), encountered internally during some multiplication, {\tt modulo} is not coming to a result even after increasing the length bound to high values. Therefore in such cases the explicit construction, like the one of the module {\tt SN} in \Cref{Torsion} will lead to the result.
\end{itemize}

\section{Conclusion and Future Work}
\noindent
Following Mora's ``manual for creating own Gr\"obner basis theory'' \cite{MoraBook4}, 
we have considered the case of free commutative Gr\"obner bases for
ideals and bimodules over $\IZ\langle X \rangle$. We have derived novel information on the building critical pairs and on criteria to discard them when possible. Armed with this
theoretical and algorithmic knowledge,
we have created an implementation in a
{\sc Singular} subsystem {\sc Letterplace}, which offers a rich functionality at a decent speed. We are not aware of yet other systems or packages, which can do such computations. 
 
In this paper we have demonstrated several important applications of
our algorithms and their implementation, in particular the determination of torsion submodules with respect to natural numbers, and operations with Iwahori-Hecke algebras.

A further extension of our implementation to the explicitly given $\IZ/m \IZ$ is planned, along the lines, discussesd in \Cref{SectionZm}. 
Also, we plan to develop (in theory and in practice) one-sided Gr\"obner bases in factor algebras (over fields, {\sc Letterplace} already offers {\tt rightStd} and more functions are under development). 
More functions for dealing with matrices and one-sided modules will make possible the usage of our implementation as a backend from the system {\sc HomAlg} \cite{homalg}. This system performs homological algebra computations within computable Abelian categories
and uses other computer algebra systems as backends for concrete calculations with matrices over rings. Other existing systems like {\sc SageMath} \cite{Sage} and {\sc OSCAR} \cite{OSCAR} can use our implementation as backend, since they have a low-level communication with {\sc Singular}. 
Indeed, thanks to developments in {\sc OSCAR}
in 2019--2021, {\sc Letterplace} functions \cite{Letterplace2021} can be called from and deliver their results
to {\sc OSCAR}. Enhancing this connection is a subject of an ongoing work.

Last but not least, in the opinion of its creators, mathematical software is still not satisfactorily cited. We ask therefore our users to cite our system {\sc Singular:Letterplace} as \citep{Letterplace2021}, which applies both to coefficients over fields and over rings.


\section{Acknowledgements}

The authors are grateful to 
Hans Sch\"onemann, Gerhard Pfister (Kaiserslautern), Leonard Schmitz, Eva Zerz (Aachen), Evelyne Hubert (Sophia Antipolis) and Anne Fr\"uh\-bis-Kr\"uger (Oldenburg) for fruitful discussions. 

We want to thank especially cordially to
 anonymous referees, whose critics and correcting suggestions led to significant enhancement of the readability of this article.

The first and third authors (V.~Levandovskyy and K.~Abou Zeid) have been supported by Project II.6 of SFB-TRR 195 ``Symbolic Tools in Mathematics and their Applications'' of the German Research Foundation (DFG).

The second author (T.~Metzlaff) has been supported by European Union’s Horizon 2020 research and innovation programme under the Marie Sk\l odowska-Curie Actions, grant agreement 813211 (POEMA).

\bibliographystyle{elsarticle-harv}
\bibliography{myref3} 
\end{document}